\titlespacing{\paragraph}{0pt}{1em}{.7em}
\declaretheorem[style=plain,parent=section,title=Theorem,refname={Theorem,Theorems}]{theo}
\declaretheorem[style=plain,sibling=theo,title=Proposition,refname={Proposition,Propositions}]{prop}
\declaretheorem[style=plain,sibling=theo,title=Corollary,refname={Corollary,Corollaries}]{cor}
\declaretheorem[style=plain,sibling=theo,title=Lemma,refname={Lemma,Lemmas}]{lem}
\declaretheorem[style=definition,sibling=theo,title=Definition,refname={Definition,Definitions}]{defin}
\declaretheorem[style=remark,sibling=theo,title=Remark,refname={Remark,Remarks}]{rem}
\newlist{mylist}{itemize}{1}
\setlist[mylist]{label=\textbullet}
\newlist{Hassum}{enumerate}{1}
\setlist[Hassum]{label=\textbf{(H\arabic*)},ref=\textnormal{\textbf{(H\arabic*)}},font=\normalfont}
\newlist{Lassum}{enumerate}{1}
\setlist[Lassum]{label=\textbf{(L\arabic*)},ref=\textnormal{\textbf{(L\arabic*)}},font=\normalfont}
\newlist{mynum}{enumerate}{1}
\setlist[mynum]{label=\textbf{(\roman*)},font=\normalfont}
\crefname{Hassumi}{condition}{conditions}
\crefname{Lassumi}{condition}{conditions}
\crefname{equation}{}{}
\crefname{mynumi}{item}{items}
\newcommand{\Ecal}{\mathcal E}
\newcommand{\wtF}{\widetilde F}
\newcommand{\wtH}{\widetilde H}
\newcommand{\Hcal}{\mathcal H}
\newcommand{\wtHcal}{\widetilde\Hcal}
\newcommand{\Nds}{\mathds N}
\newcommand{\Ocal}{\mathcal O}
\newcommand{\ovOcal}{\overline\Ocal}
\newcommand{\Rds}{\mathds R}
\newcommand{\Scal}{\mathcal S}
\newcommand{\ovS}{\overline S}
\newcommand{\wtS}{\widetilde S}
\newcommand{\Tcal}{\mathcal T}
\newcommand{\ovT}{\overline T}
\newcommand{\Vbf}{\mathbf V}
\newcommand{\wtgamma}{\widetilde\gamma}
\newcommand{\whmu}{\widehat\mu}
\newcommand{\ovvarphi}{\overline\varphi}
\newcommand{\myemail}[2][]{\textsuperscript{#1}\href{mailto:#2}{\texttt{#2}}}
\title{Error Estimate for a Semi-Lagrangian Scheme for Hamilton--Jacobi Equations on Networks}
\author[1]{Elisabetta Carlini}
\author[1]{Valentina Coscetti}
\author[2]{Marco Pozza}
\affil[1]{Sapienza, University of Rome, Italy. \textit{Email addresses:} \myemail{carlini@mat.uniroma1.it}, \myemail{valentina.coscetti@uniroma1.it}}
\affil[2]{Link Campus University, Rome, Italy. \textit{Email address:} \myemail{m.pozza@unilink.it}}
\date{}
\begin{document}

    \maketitle

    \begin{abstract}
        We examine the numerical approximation of time-dependent Hamilton–Jacobi equations on networks, providing a convergence error estimate for the semi-Lagrangian scheme introduced in~\cite{CarliniSiconolfi23}, where convergence was proven without an error estimate. We derive a convergence error estimate of order one-half. This is achieved showing the equivalence between two definitions of solutions to this problem proposed in~\cite{ImbertMonneau17} and~\cite{Siconolfi22}, a result of independent interest, and applying a general convergence result from~\cite{CarliniFestaForcadel20}.
    \end{abstract}

    \paragraph{2020 Mathematics Subject Classification:} 65M15, 49L25, 65M12, 35R02.

    \paragraph{Keywords:} error estimate, Hamilton--Jacobi equations, semi-Lagrangian scheme, embedded networks.

    \section{Introduction}

    In this article, we focus on the numerical approximation of evolutive Hamilton--Jacobi (HJ) equations posed on networks. In recent years, the study of control problems on networks has gained significant attention due to its broad applicability in various domains, including data transmission, traffic management, flame propagation, and urban infrastructure planning, see~\cite{ImbertMonneauZidani12,CamilliFestaTozza17,CamilliCarliniMarchi18,AchdouMannucciMarchiTchou24}.

    The HJ equation has been extensively studied in classical settings, such as Euclidean unbounded or bounded domains. However, when posed on networks, the problem becomes significantly more challenging. Networks introduce unique complications, such as handling junction conditions at vertices, which represent the points where multiple edges meet. These challenges have led to the development of new mathematical techniques and suitable numerical methods.

    One key challenge in addressing HJ equations on networks is defining a well-posed solution, especially when each arc has unrelated Hamiltonians. This issue becomes particularly complex in evolutive problems, where discontinuities arise at the vertices due to the one-dimensional nature of the network. Properly handling these vertex discontinuities is critical for ensuring the well-posedness of the problem.

    Two main approaches have been proposed, both relying on the concept of flux limiters. The first approach, introduced by Imbert and Monneau in~\cite{ImbertMonneau17}, addresses the difficulties at the vertices by employing special test functions. These functions are defined across the entire network and act simultaneously on all arcs adjacent to a given vertex. In contrast, the second approach, proposed by Siconolfi in~\cite{Siconolfi22}, does not use special test functions. Instead, standard test functions are defined individually on each arc, acting separately and independently on them.

    The numerical approximation of the evolutive Hamilton--Jacobi equation in this context remains an active area of research. The inherent complexity of networks, especially the treatment of junctions, has resulted in the need for specialized approximation methods that can accurately and efficiently capture the dynamics.

    Several approaches have been proposed to tackle these challenges. In~\cite{CostesequeLebacqueMonneau14}, a finite difference scheme was introduced, and convergence was proven under a Courant--Friedrichs--Lewy (CFL) condition. An error estimate for this scheme was presented in~\cite{GuerandKoumaiha19}, achieving a convergence order of up to $\frac{1}{2}$, again under the CFL condition. In~\cite{CarliniFestaForcadel20}, a semi-Lagrangian scheme was developed to approximate evolutive problems with Hamiltonians that also depend on the state variable. This approach proved particularly effective, as a convergence result was obtained without restrictions on the time step. Moreover, under the same time restrictions imposed in~\cite{CostesequeLebacqueMonneau14}, the authors proved a rate of convergence of order $\frac{1}{2}$.

    More recently, in~\cite{CarliniSiconolfi23} a new semi-Lagrangian scheme was proposed, which exploits the viscosity solution framework introduced in~\cite{Siconolfi22}. In this framework, convergence is proven under the assumption that the ratio between the space step and the time step tends to zero, effectively an ``inverse CFL condition''. This scheme is particularly appealing due to its ability to allow large time steps while remaining explicit. Additionally, its structure---where computations are performed separately on each arc---makes it highly suitable for parallelization. However, a convergence error estimate for this scheme has not yet been proved.

    In this paper, we aim to provide a convergence error estimate for the scheme proposed in~\cite{CarliniSiconolfi23}. To achieve this, we first prove the equivalence between the two definitions of viscosity solutions introduced in~\cite{ImbertMonneau17} and~\cite{Siconolfi22}. We believe that this equivalence is of independent interest, as it bridges two important frameworks. Furthermore, it allows us to prove a new convergence result and derive a convergence error estimate for the scheme in~\cite{CarliniSiconolfi23}.

    The error estimate is obtained under the same time step restriction as those required in~\cite{CostesequeLebacqueMonneau14} for convergence, and in~\cite{GuerandKoumaiha19} and~\cite{CarliniFestaForcadel20} for error estimates. Taking advantage of the equivalence between the two viscosity solution definitions, we apply the general convergence result of~\cite{CarliniFestaForcadel20}, which provides an error estimate for any monotone, stable and consistent scheme, with the viscosity solution defined in~\cite{ImbertMonneau17}.

    The paper is organized as follows. In \cref{sec:intro}, we introduce the problem, present the main hypotheses, and provide the basic definitions. \Cref{Sec:equivalence} contains one of the key results of the paper: the proof of the equivalence between the viscosity solution definitions given in~\cite{ImbertMonneau17} and~\cite{Siconolfi22}. In \cref{sec:Scheme}, we describe the numerical scheme proposed in~\cite{CarliniSiconolfi23} and present our second main result: an error estimate for the scheme under a time step restriction. Finally, in \cref{sec:numerics}, we verify the numerical convergence in two test cases with different choices of time steps.

    \paragraph{Acknowledgments.}
    The first two authors were partially supported by Italian Ministry of Instruction, University and Research (MIUR) (PRIN Project2022238YY5, ``Optimal control problems: analysis, approximation'') and INdAM-research group GNCS (CUP$\_$ E53C23001670001, ``Metodi numerici innovativi per equazioni di Hamilton--Jacobi''). The first author was partially supported by KAUST through the subaward agreement ORA-2021-CRG10-4674.6. The third author is a member of the INdAM research group GNAMPA.

    \section{Basic Definitions and Assumptions}\label{sec:intro}

    We describe here our setting, namely the network and the time-dependent problem defined on it.

    \subsection{Networks}

    We fix a dimension $N$ and $\Rds^N$ as ambient space. An \emph{embedded network}, or \emph{continuous graph}, is a subset $\Gamma\subset\Rds^N$ of the form
    \begin{equation*}
        \Gamma=\bigcup_{\gamma\in\Ecal}\gamma([0,|\gamma|])\subset\Rds^N,
    \end{equation*}
    where $\Ecal$ is a finite collection of regular (i.e., $C^1$ with non-vanishing derivative) simple oriented curves, called \emph{arcs} of the network, with Euclidean length $|\gamma|$ and parameterized by arc length in $[0,|\gamma|]$ (i.e., $|\dot\gamma|\equiv1$ for any $\gamma\in\Ecal$). Note that we are also assuming existence of one-sided derivatives at the endpoints $0$ and $|\gamma|$. We stress out that a regular change of parameters does not affect our results.

    Observe that on the support of any arc $\gamma$, we also consider the inverse parametrization defined as
    \begin{equation*}
        \widetilde\gamma(s)\coloneqq\gamma(|\gamma|-s),\qquad\text{for }s\in[0,|\gamma|].
    \end{equation*}
    We call $\widetilde\gamma$ the \emph{inverse arc} of $\gamma$. We assume
    \begin{equation}\label{eq:nosovrap}
        \gamma((0,|\gamma|))\cap\gamma'\left(\left(0,\left|\gamma'\right|\right)\right)=\emptyset,\qquad\text{whenever }\gamma'\ne\gamma,\widetilde\gamma.
    \end{equation}

    We call \emph{vertices} the initial and terminal points of the arcs, and denote by $\Vbf$ the sets of all such vertices. Note that~\eqref{eq:nosovrap} implies
    \begin{equation*}
        \gamma((0,|\gamma|))\cap\Vbf=\emptyset,\qquad\text{for any }\gamma\in\Ecal.
    \end{equation*}

    We assume that it is given an \emph{orientation} of $\Gamma$, i.e., a subset $\Ecal^+\subset\Ecal$ containing exactly one arc in each pair $\{\gamma,\wtgamma\}$. We further set, for each $x\in\Gamma$,
    \begin{equation*}
        \Ecal_x^+\coloneqq\left\{\gamma\in\Ecal^+:\text{$\gamma(s)=x$ for some $s\in[0,|\gamma|]$}\right\}.
    \end{equation*}
    Observe that $\Ecal_x^+$ is a singleton when $x\in\Gamma\setminus\Vbf$, while it contains in general more than one element if $x$ is instead a vertex.

    We assume that the network is connected, namely given two vertices there is a finite concatenation of arcs linking them. A \emph{loop} is an arc with initial and final point coinciding. The unique restriction we require on the geometry of $\Gamma$ is
    \begin{mylist}
        \item $\Ecal$ does not contain loops.
    \end{mylist}
    This condition is due to the fact that in the known literature about time-dependent Hamilton--Jacobi equations on networks no loops are admitted, see, e.g., \cite{ImbertMonneau17,Siconolfi22,LionsSouganidis17}.

    We say that a continuous function $\Phi:\Gamma\to\Rds$ belongs to $C^i(\Gamma)$, with $i\in\{1,2\}$, if $\Phi\circ\gamma\in C^i([0,|\gamma|])$ for any $\gamma\in\Ecal$. We will also use the notation
    \begin{equation*}
        \partial\Phi(x)\coloneqq\{D(\Phi\circ\gamma)(0)\}_{\{\gamma:\gamma(0)=x\}},\qquad\text{for }x\in\Vbf.
    \end{equation*}
    This definition can be trivially extended to functions on $\Gamma\times\Rds^+$.

    \subsection{Time-Dependent Hamilton--Jacobi Equations}

    A Hamiltonian on $\Gamma$ is a collection of Hamiltonians $\wtHcal\coloneqq\left\{\wtH_\gamma\right\}_{\gamma\in\Ecal}$, where
    \begin{alignat*}{2}
        \wtH_\gamma:[0,|\gamma|]\times\Rds&&\:\longrightarrow\:&\Rds\\
        (s,\mu)&&\:\longmapsto\:&\wtH_\gamma(s,\mu),
    \end{alignat*}
    satisfying
    \begin{equation*}
        \wtH_{\wtgamma}(s,\mu)=\wtH_{\gamma}(|\gamma|-s,-\mu),\qquad\text{for any arc }\gamma.
    \end{equation*}
    We emphasize that, apart the above compatibility condition, the Hamiltonians $\wtH_\gamma$ are \emph{unrelated}.

    We require each $\wtH_\gamma$ to be:
    \begin{Hassum}
        \item continuous in both arguments;
        \item\label{condconv} convex in $\mu$;
        \item\label{condcoerc} coercive in $\mu$, uniformly in $s$.
    \end{Hassum}

    We consider the following time-dependent problem on $\Gamma$:
    \begin{equation}\label{eq:globteik}\tag{$\Hcal$J}
        \partial_t u(x,t)+\wtHcal(x,D_x u)=0,\qquad\text{on }\Gamma\times(0,\infty).
    \end{equation}
    This notation synthetically indicates the family (for $\gamma$ varying in $\Ecal$) of Hamilton--Jacobi equations
    \begin{equation}\label{eq:teikg}\tag{HJ\textsubscript{$\gamma$}}
        \partial_t U(s,t)+\wtH_\gamma(s,\partial_s U(s,t))=0,\qquad\text{on }(0,|\gamma|)\times(0,\infty).
    \end{equation}

    Here (sub/super)solutions to the local problem~\eqref{eq:teikg} are intended in the viscosity sense, see for instance~\cite{BardiCapuzzo-Dolcetta97} for a comprehensive treatment of viscosity solutions theory. We just recall that, given an open set $\Ocal$ and a continuous function $u:\ovOcal\to\Rds$, a \emph{supertangent} (resp.\ \emph{subtangent}) to $u$ at $x\in\Ocal$ is a viscosity test function from above (resp.\ below). We say that a subtangent $\varphi$ to $u$ at $x\in\partial\Ocal$ is \emph{constrained to $\ovOcal$} if $x$ is a minimizer of $u-\varphi$ in a neighborhood of $x$ intersected with $\ovOcal$.

    We call \emph{flux limiter} any function $x\mapsto c_x$ from $\Vbf$ to $\Rds$ satisfying
    \begin{equation}\label{eq:fluxlimdef}
        c_x\le-\max_{\gamma\in\Ecal^+_x,\,s\in[0,|\gamma|]}\min_{\mu\in\Rds}\wtH_\gamma(s,\mu),\qquad\text{for any }x\in\Vbf.
    \end{equation}

    The definition of (sub/super)solutions to~\eqref{eq:globteik} given in~\cite{Siconolfi22} is as follows:

    \begin{defin}\label{deftsubsol}
        We say that a continuous function $w:\Gamma\times\Rds^+\to\Rds$ is a \emph{viscosity subsolution} to~\eqref{eq:globteik} if
        \begin{mynum}
            \item $(s,t)\mapsto w(\gamma(s),t)$ is a viscosity subsolution to~\eqref{eq:teikg} in $(0,|\gamma|)\times(0,\infty)$ for any $\gamma\in\Ecal^+$;
            \item for any $t_0\in(0,\infty)$ and vertex $x$, if $\psi$ is a $C^1$ supertangent to $w(x,\cdot)$ at $t_0$ then $\partial_t\psi(t_0)\le c_x$.
        \end{mynum}
        A continuous function $v:\Gamma\times\Rds^+\to\Rds$ is called a \emph{viscosity supersolution} to~\eqref{eq:globteik} if
        \begin{mynum}[resume]
            \item $(s,t)\mapsto v(\gamma(s),t)$ is a viscosity supersolution to~\eqref{eq:teikg} in $(0,|\gamma|)\times(0,\infty)$ for any $\gamma\in\Ecal^+$;
            \item\label{stateconst} for every vertex $x$ and $t_0\in(0,\infty)$, if $\psi$ is a $C^1$ subtangent to $v(x,\cdot)$ at $t_0$ such that $\partial_t\psi(t_0)<c_x$, then there is a $\gamma$ with $\gamma(0)=x$ such that
            \begin{equation*}
                \partial_t\varphi(0,t_0)+H_\gamma(0,\partial_s\varphi(0,t_0))\ge0
            \end{equation*}
            for any $\varphi$ that is a constrained $C^1$ subtangent to $(s,t)\mapsto v(\gamma(s),t)$ at $(0,t_0)$. We stress out that this condition does not require the existence of constrained subtangents.
        \end{mynum}
        Furthermore, we say that $u:\Gamma\times\Rds^+\to\Rds$ is a \emph{viscosity solution} to~\eqref{eq:globteik} if it is both a viscosity subsolution and supersolution.
    \end{defin}

    We also have a result concerning the existence of solutions.

    \begin{theo}\label{exunsolt}
        \emph{\cite[Proposition 7.8]{Siconolfi22}} Given $u_0\in C(\Gamma)$ and a flux limiter $c_x$, \eqref{eq:globteik} admits a unique solution with initial datum $u_0$ and flux limiter $c_x$. If $u_0$ is Lipschitz continuous, the solution is Lipschitz continuous as well.
    \end{theo}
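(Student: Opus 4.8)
The plan is to derive uniqueness from a comparison principle and existence from either Perron's method or a control-theoretic representation formula, reading off the Lipschitz regularity from the latter.

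First I would establish the comparison principle: if $w$ is a subsolution and $v$ a supersolution to~\eqref{eq:globteik} with $w(\cdot,0)\le v(\cdot,0)$, then $w\le v$ on $\Gamma\times\Rds^+$; uniqueness is then immediate. The proof proceeds by the standard doubling-of-variables argument, penalizing in both space and time and localizing in $t$. Away from the vertices this reduces to the classical comparison on each arc, where the convexity in \cref{condconv} and the coercivity in \cref{condcoerc} of $\wtH_\gamma$ supply the usual viscosity estimates.

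The hard part, and the main obstacle, is the analysis at a vertex $x$, where the doubling argument must be reconciled with the junction conditions. When the penalized maximum of $w-v$ localizes at a vertex one cannot use interior test functions: the subsolution only yields the one-sided bound $\partial_t\psi(t_0)\le c_x$ on $C^1$ supertangents in time, whereas the supersolution condition in \cref{stateconst} is an existential statement over the arcs emanating from $x$, phrased through \emph{constrained} subtangents. The delicate step is to show that these two conditions are incompatible with a strict violation $w(x,t_0)>v(x,t_0)$: if the time-derivative of the optimal test function at $x$ falls strictly below $c_x$, then \cref{stateconst} forces, along some arc $\gamma$ with $\gamma(0)=x$, the inequality $\partial_t\varphi(0,t_0)+\wtH_\gamma(0,\partial_s\varphi(0,t_0))\ge0$ for every constrained subtangent $\varphi$, and this has to be set against the subsolution inequality on the same arc to reach a contradiction. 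The case in which no constrained subtangent exists must be treated separately, exploiting coercivity to construct one or routing the argument through the flux-limiter bound~\eqref{eq:fluxlimdef}.

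With comparison in hand, existence would follow by Perron's method: put $u\coloneqq\sup\{w:\text{$w$ a subsolution with }w(\cdot,0)\le u_0\}$, after building sub- and supersolution barriers from the coercivity of the Hamiltonians to pin the initial datum. One then verifies that the upper semicontinuous envelope of $u$ is a subsolution and the lower one a supersolution---the vertex verification again being the subtle point---and invokes comparison to conclude that $u$ is continuous, attains $u_0$, and solves~\eqref{eq:globteik}. Alternatively one constructs $u$ directly as the value function of a calculus-of-variations problem on $\Gamma$, minimizing an action of the Lagrangians dual to the $\wtH_\gamma$ plus $u_0$ at the starting point, with the flux limiter entering as a junction cost; this representation makes the final claim transparent. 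Indeed, if $u_0$ is Lipschitz, the Lipschitz regularity of $u$ in $x$ and $t$ follows by comparing the actions of a trajectory and a spatial or temporal perturbation of it, using the coercivity in \cref{condcoerc} to bound admissible velocities and hence to extract a uniform Lipschitz constant; equivalently, one compares $u$ with its translates through the comparison principle.
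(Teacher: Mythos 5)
First, a point about the ground truth: the paper does not prove this statement at all. \Cref{exunsolt} is quoted from \cite[Proposition 7.8]{Siconolfi22} and used purely as an external input (together with \cref{flsolexun}) in the proof of \cref{defequiv}. So there is no in-paper proof to compare your attempt against, and it must be judged as a standalone proof of Siconolfi's result. Judged that way, it is a plan rather than a proof: the architecture you propose (comparison principle, then Perron's method or a variational representation, with Lipschitz regularity read off the representation) is the standard one, but every step that is specific to the network setting---that is, everything that makes the statement a theorem rather than an exercise---is flagged as ``delicate'' and then deferred.

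Two gaps are concrete. First, in the comparison argument the vertex case is exactly where the proof lives, and your sketch stops at the point where the work begins. Note that, by \cref{deftsubsol}, the subsolution condition at a vertex constrains only the time derivative of supertangents to $w(x,\cdot)$, namely $\partial_t\psi(t_0)\le c_x$, and gives no spatial information along the arcs, since the arc-wise subsolution property is required only on the open strip $(0,|\gamma|)\times(0,\infty)$; the supersolution condition \cref{deftsubsol}\ref{stateconst} is an existential statement quantified over all \emph{constrained} subtangents on some arc $\gamma$ with $\gamma(0)=x$. To derive a contradiction from $w(x,t_0)>v(x,t_0)$ one must build, on that specific arc, a constrained subtangent to $v$ whose derivatives are coupled to the doubled test function, and then play it against the meager time-only information on $w$. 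This requires constructing test functions by hand, using \cref{condconv,condcoerc} and the flux-limiter bound~\eqref{eq:fluxlimdef}---compare the proof of \cref{defequiv} in this paper, where precisely such a construction (via the truncated slope $\whmu_\gamma$ of~\eqref{eq:minHdef}) is the whole content. None of this appears in your proposal; saying ``this has to be set against the subsolution inequality to reach a contradiction'' is a restatement of the goal, not an argument. Second, the existence part is equally open on both routes you offer: the Perron verification that the candidate solution satisfies the vertex condition \ref{stateconst} is not carried out (and it is not obvious how that existential, constrained-subtangent condition passes to envelopes of suprema of subsolutions), while the variational representation with junction costs is itself a substantial construction whose equivalence with \cref{deftsubsol} would have to be proved, not assumed. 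As it stands, the proposal is a reasonable table of contents for a proof of \cref{exunsolt}, not a proof.
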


    \section{An Equivalent Notion of Solution}\label{Sec:equivalence}

    The main result of this paper is an error estimate for the semi-Lagrangian scheme described in~\cite{CarliniSiconolfi23} which approximate solutions, in the sense of \cref{deftsubsol}, to~\eqref{eq:globteik}. To do so we will employ a result given in~\cite{CarliniFestaForcadel20} for numerical schemes approximating a different kind of solution proposed in~\cite{ImbertMonneau17}. Thereby our first step is to show that these different notions of solutions to~\eqref{eq:globteik} are equivalent. Although these definitions are similar, to our knowledge there is no previous result showing their equivalence.

    We introduce the functions
    \begin{equation}\label{eq:minHdef}
        \wtH^-_\gamma(s,\mu)\coloneqq\left\{
        \begin{aligned}
            &\wtH_\gamma(s,\mu),&&\text{for }\mu\le\whmu_\gamma,\,s\in[0,|\gamma|]\\
            &\wtH_\gamma(s,\whmu_\gamma),&&\text{for }\mu>\whmu_\gamma,\,s\in[0,|\gamma|],
        \end{aligned}
        \right.\qquad\text{for }\gamma\in\Ecal,
    \end{equation}
    where $\whmu_\gamma$ is a minimizer of $\mu\mapsto\wtH_\gamma(0,\mu)$. We point out that such minimizer exists by \cref{condconv,condcoerc}. Fixed a flux limiter $c_x$, we then set for $x\in\Vbf$ and $p=\{p_\gamma\}_{\{\gamma:\gamma(0)=x\}}$, where each $p_\gamma$ is parallel to $\dot\gamma(0)$,
    \begin{equation}\label{eq:Fvertdef}
        \wtF(x,p)\coloneqq\max\left\{-c_x,\max_{\{\gamma:\gamma(0)=x\}}\wtH^-_\gamma(0,p_\gamma\dot\gamma(0))\right\}.
    \end{equation}
    We further recall the definition of upper and lower semicontinuous envelopes $u^*$ and $u_*$ of a locally bounded function $u:\Gamma\times\Rds^+\to\Rds$:
    \begin{equation*}
        u^*(x,t)\coloneqq\limsup_{(y,r)\to(x,t)}u(y,r),\qquad u_*(x,t)\coloneqq\liminf_{(y,r)\to(x,t)}u(y,r).
    \end{equation*}

    \begin{defin}\label{IMdef}
        \cite[Definition 5.5]{ImbertMonneau17} We say that $w$ is a \emph{flux limited subsolution} to~\eqref{eq:globteik} if
        \begin{mynum}
            \item $(s,t)\mapsto w^*(\gamma(s),t)$ is a viscosity subsolution to~\eqref{eq:teikg} in $(0,|\gamma|)\times(0,\infty)$ for any $\gamma\in\Ecal$;
            \item fixed a flux limiter $c_x$, for all $(x,t_0)\in\Vbf\times(0,\infty)$ and $C^1\left(\Gamma\times\Rds^+\right)$ supertangent $\Psi$ to $w^*$ at $(x,t_0)$
            \begin{equation*}
                \partial_t\Psi(x,t_0)+\wtF(x,\partial_x\Psi(x,t_0))\le0.
            \end{equation*}
        \end{mynum}
        We say that $v$ is a \emph{flux limited supersolution} to~\eqref{eq:globteik} if
        \begin{mynum}[resume]
            \item\label{IMsupsolloc} $(s,t)\mapsto v_*(\gamma(s),t)$ is a viscosity supersolution to~\eqref{eq:teikg} in $(0,|\gamma|)\times(0,\infty)$ for any $\gamma\in\Ecal$;
            \item\label{IMstateconst} fixed a flux limiter $c_x$, for all $(x,t_0)\in\Vbf\times(0,\infty)$ and $C^1\left(\Gamma\times\Rds^+\right)$ subtangent $\Phi$ to $v_*$ at $(x,t_0)$
            \begin{equation*}
                \partial_t\Phi(x,t_0)+\wtF(x,\partial_x\Phi(x,t_0))\ge0.
            \end{equation*}
        \end{mynum}
        Furthermore, we say that $u$ is a \emph{flux limited solution} to~\eqref{eq:globteik} if it is both a flux limited subsolution and supersolution.
    \end{defin}

    \begin{theo}\label{flsolexun}
        \emph{\cite[Corollary 5.9]{ImbertMonneau17}} Let $c_x$ be a flux limiter and $u_0\in C(\Gamma)$. There is a unique locally bounded solution to~\eqref{eq:globteik}, in the sense of \cref{IMdef}, with initial datum $u_0$ and flux limiter $c_x$.
    \end{theo}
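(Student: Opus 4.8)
The statement bundles an existence and a uniqueness assertion for flux limited solutions in the sense of \cref{IMdef}, so the plan is to treat them separately: uniqueness via a comparison principle, and existence via Perron's method built on top of that comparison principle. This is the classical pattern for Hamilton--Jacobi equations, and away from the vertices it reduces verbatim to the standard theory for the local equations~\eqref{eq:teikg}; all the genuinely new difficulty is concentrated in the junction condition encoded by $\wtF$.

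For uniqueness I would prove the stronger comparison statement: if $w$ is a bounded flux limited subsolution and $v$ a bounded flux limited supersolution with $w^*(\cdot,0)\le v_*(\cdot,0)$, then $w^*\le v_*$ on $\Gamma\times[0,T]$ for every $T>0$; uniqueness of the locally bounded solution follows by applying this twice. The argument is by doubling of variables. Penalizing $|t-s|^2/\varepsilon$ controls the time derivatives in the usual way, and on the interior of a fixed arc the quadratic penalization $|x-y|^2/\varepsilon$, together with \cref{condconv,condcoerc}, yields the classical interior comparison. The whole obstruction sits at a vertex $x$, where a quadratic penalization is blind to the junction condition: one must replace it by a vertex test function $G_\varepsilon$ defined on the product of the star of half-lines meeting at $x$. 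The construction tunes $G_\varepsilon$ so that, when the doubled functional attains a maximum at a point with $\bar x=\bar y=x$ and $\bar t,\bar s$ interior in time, the gradient slots $\partial_xG_\varepsilon$ and $-\partial_yG_\varepsilon$ feed the subsolution inequality for $w^*$ and the supersolution inequality for $v_*$ at the vertex and combine with the right sign through $\wtF$. A preliminary reduction, exploiting that the sub/supersolution properties are monotone in the flux limiter to normalize $c_x$ to the critical value determined by $\max_{\gamma,s}\min_\mu\wtH_\gamma(s,\mu)$, is what makes this matching possible.

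For existence I would run Perron's method. First I build barriers: from $u_0\in C(\Gamma)$, using \cref{condcoerc}, one produces a flux limited subsolution $\underline u$ and supersolution $\overline u$ with $\underline u(\cdot,0)=\overline u(\cdot,0)=u_0$, essentially of the form $u_0(x)\pm Ct$ with $C$ large and adjusted near the vertices to respect $\wtF$. Setting $u\coloneqq\sup\{w:w\text{ is a flux limited subsolution},\ \underline u\le w\le\overline u\}$, stability of the subsolution condition---including its junction inequality, which passes to the upper semicontinuous envelope of a family of subsolutions---shows that $u^*$ is again a flux limited subsolution, while the standard bump argument shows that $u_*$ is a flux limited supersolution; the delicate point is once more the vertex, where one checks that a hypothetical failure of the supersolution inequality can be repaired by locally raising $u$ without violating the junction condition. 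The comparison principle then forces $u^*\le u_*$, so $u$ is continuous, equals a flux limited solution, and attains $u_0$ at $t=0$ by the barriers.

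The step I expect to be the main obstacle is the comparison principle at a vertex, and inside it the construction of the vertex test function $G_\varepsilon$ together with the verification of its two defining estimates: it is precisely this object, replacing the quadratic penalization, that forces the flux limited junction conditions of $w$ and $v$ to interact with the correct sign. Everything else---time doubling, interior comparison, barrier construction, and the Perron envelopes---is a routine adaptation of the Euclidean theory to the present setting, which is one-dimensional on each arc. I note in passing that, once the equivalence of \cref{Sec:equivalence} is available, existence could alternatively be inherited from the continuous Siconolfi solution of \cref{exunsolt}, but uniqueness among merely locally bounded solutions would still require the comparison principle above.
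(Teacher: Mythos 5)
The paper does not prove this statement at all: it is imported verbatim, with attribution, as Corollary 5.9 of \cite{ImbertMonneau17}, so there is no internal proof to compare your attempt against. Measured instead against the actual proof in the cited reference, your outline is faithful to it: Imbert and Monneau obtain uniqueness from a comparison principle whose whole junction analysis rests on the vertex test function you describe (its construction is the central technical contribution of their paper and occupies a large part of it), and existence by Perron's method with barriers built from $u_0$. Be aware, though, that in a self-contained argument $G_\varepsilon$ cannot simply be postulated together with its two compatibility estimates; producing it under \cref{condconv,condcoerc} is a long and delicate construction, so what you have is an accurate road map in which the single hardest object is named but not supplied.

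Two smaller remarks. First, the flux-limiter normalization you invoke runs in the opposite direction here: with the sign conventions of this paper, \eqref{eq:fluxlimdef} already forces $-c_x\ge\max_{\gamma\in\Ecal_x^+,\,s}\min_\mu\wtH_\gamma(s,\mu)$, so the junction function $\wtF$ of \cref{IMdef} is already in reduced form, and the reduction step of the cited work (replacing a too-small limiter by the critical one) is vacuous in this setting; what their comparison proof does additionally use is a reduction of the class of vertex test functions, which your sketch does not mention. Second, your closing observation is sound and not circular: the implication that a continuous solution in the sense of \cref{deftsubsol} is a solution in the sense of \cref{IMdef}, established in the first part of the proof of \cref{defequiv}, does not rely on \cref{flsolexun}, so together with \cref{exunsolt} it would indeed yield existence --- but, as you correctly note, uniqueness among merely locally bounded solutions still requires the comparison principle, which is where all the work lies.
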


    As pointed out in~\cite{Siconolfi22}, \cref{IMdef,deftsubsol} are closely related. Indeed, according to \cite[Theorem 2.11]{ImbertMonneau17}, the notion of continuous subsolution is the same:

    \begin{prop}\label{equivsubsol}
        A continuous function is a subsolution to~\eqref{eq:globteik} in the sense of \cref{deftsubsol} if and only it is a subsolution in the sense of \cref{IMdef}.
    \end{prop}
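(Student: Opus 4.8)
The plan is to reduce the claim to a comparison of the two junction conditions, since condition~(i) is essentially common to \cref{deftsubsol,IMdef}. Indeed, for a continuous $w$ one has $w^*=w$, and the compatibility relation $\wtH_{\wtgamma}(s,\mu)=\wtH_\gamma(|\gamma|-s,-\mu)$ makes being a viscosity subsolution of~\eqref{eq:teikg} along $\gamma$ equivalent to being one along $\wtgamma$; hence quantifying over $\Ecal^+$ or over all of $\Ecal$ gives the same requirement, and the two versions of~(i) coincide. Throughout I may therefore assume~(i), and I write, for a vertex $x$, a time $t_0$ and a $C^1(\Gamma\times\Rds^+)$ supertangent $\Psi$ to $w$ at $(x,t_0)$, the abbreviations $a\coloneqq\partial_t\Psi(x,t_0)$ and $p_\gamma\coloneqq D(\Psi\circ\gamma)(0,t_0)$ for each $\gamma$ with $\gamma(0)=x$.

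First I would establish the key reduction: under~(i), every such $\Psi$ satisfies $a+\wtH^-_\gamma(0,p_\gamma)\le0$ for every $\gamma$ with $\gamma(0)=x$. Granting this, the two-slot maximum in~\eqref{eq:Fvertdef} collapses, because $\wtH^-_\gamma(0,p_\gamma)\le -a$ for all such $\gamma$ forces $a+\max_\gamma\wtH^-_\gamma(0,p_\gamma)\le0$, whence
\begin{equation*}
a+\wtF(x,\partial_x\Psi)=\max\bigl\{a-c_x,\;a+\textstyle\max_\gamma\wtH^-_\gamma(0,p_\gamma)\bigr\}\le0\iff a\le c_x.
\end{equation*}
Thus, given~(i), the junction inequality of \cref{IMdef} is equivalent to the single requirement that $a=\partial_t\Psi(x,t_0)\le c_x$ for every network supertangent $\Psi$.

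The displayed reduction is the crux, and I expect it to be the main obstacle: it asserts that a branchwise subsolution automatically satisfies the flux-limited junction inequality written with the truncated Hamiltonians $\wtH^-_\gamma$ of~\eqref{eq:minHdef}. I would prove it by the standard boundary-to-interior perturbation. Viewing $\phi\coloneqq\Psi\circ\gamma$ as a one-sided $C^1$ supertangent to $U\coloneqq w(\gamma(\cdot),\cdot)$ at the boundary point $(0,t_0)$, I add to $U-\phi$ a small correction vanishing at $s=0$ and positive for $s>0$, so as to move the maximum to an interior point $(s_\varepsilon,t_\varepsilon)\to(0,t_0)$; the interior subsolution inequality for~\eqref{eq:teikg} there, of the form $a_\varepsilon+\wtH_\gamma(s_\varepsilon,p_\gamma-o(1))\le0$, then passes to the limit to yield $a+\wtH_\gamma(0,p_\gamma)\le0$ when $p_\gamma\le\whmu_\gamma$. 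For $p_\gamma>\whmu_\gamma$ one has $\wtH^-_\gamma(0,p_\gamma)=\min_\mu\wtH_\gamma(0,\mu)$, and the same scheme—now reducing the effective spatial slope down to the nonincreasing branch of $\wtH_\gamma(0,\cdot)$, which is monotone by \cref{condconv}—produces the flattened value. This two-branch bookkeeping is precisely the delicate part, and it is the content underlying \cite[Theorem 2.11]{ImbertMonneau17}.

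Finally I would match the reduced condition to \cref{deftsubsol}~(ii), i.e.\ show that ``$a\le c_x$ for every network supertangent $\Psi$ at $(x,t_0)$'' is equivalent to ``$\partial_t\psi(t_0)\le c_x$ for every $C^1$ supertangent $\psi$ to $w(x,\cdot)$ at $t_0$''. One direction is immediate, since the restriction $t\mapsto\Psi(x,t)$ is such a $\psi$ with $\partial_t\psi(t_0)=a$, so \cref{deftsubsol}~(ii) forces $a\le c_x$ and hence \cref{IMdef} holds. For the converse I would extend a given time supertangent $\psi$ to a network supertangent by setting $\Psi(\gamma(s),t)\coloneqq\psi(t)+p_\gamma s$ near $x$ with the slopes $p_\gamma$ taken large; because a viscosity subsolution of the coercive equation~\eqref{eq:teikg} is locally Lipschitz in $s$, uniformly in $t$ near $t_0$ (a standard consequence of \cref{condcoerc} together with continuity), choosing each $p_\gamma$ above that local Lipschitz constant keeps $\Psi\ge w$ near $(x,t_0)$ with equality at the vertex. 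Then $\Psi$ is an admissible network supertangent with $\partial_t\Psi(x,t_0)=\partial_t\psi(t_0)$, and the reduced condition returns $\partial_t\psi(t_0)\le c_x$. Combining the three steps gives the equivalence of the two notions of continuous subsolution.
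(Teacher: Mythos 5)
Your plan is sound in outline and in fact more ambitious than the paper itself, which offers no proof of \cref{equivsubsol} at all: the statement is quoted directly from \cite[Theorem 2.11]{ImbertMonneau17}. Your first step (matching the local conditions) is fine, and your ``key reduction'' is a true statement, but be aware that it essentially \emph{is} the cited theorem: the sketch you give hides its real difficulty, since adding a correction that is positive for $s>0$ need not move the maximum of $U-\phi$ into the interior (if $U$ detaches linearly from $\phi$ in $s$, the maximum stays on $s=0$ for every small perturbation), and resolving this dichotomy is exactly the critical-slope analysis carried out in \cite{ImbertMonneau17}. So on that step you are, like the paper, ultimately resting on the reference; that is legitimate, but it should be presented as a citation rather than as a proof.

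The step that genuinely fails is the converse extension in your last paragraph. The regularity fact you invoke --- that a continuous viscosity subsolution of the coercive evolutive equation \eqref{eq:teikg} is locally Lipschitz in $s$ uniformly in $t$ --- is false. Coercivity \labelcref{condcoerc} bounds $|\partial_s U|$ only through $-\partial_t U$, and a \emph{sub}solution admits no lower bound on $\partial_t U$; the Lipschitz property you have in mind holds for stationary inequalities $\wtH(s,\partial_s U)\le C$, not for evolutive ones. Concretely, for $\wtH_\gamma(s,\mu)=|\mu|$ (which satisfies all the assumptions in force for \cref{equivsubsol}) the function $U(s,t)=-\sqrt{(t-t_0-s)^+}$ is continuous on $[0,\delta]\times[t_0-\delta,t_0+\delta]$ and is a viscosity subsolution of $\partial_t U+|\partial_s U|\le0$ in the interior (it solves the equation off the line $t=t_0+s$, and any test function touching from above on that line has gradient proportional to $(1,-1)$, for which the inequality holds), yet its spatial Lipschitz constant blows up near that line, and no linear extension $\psi(t)+p_\gamma s$ with finite $p_\gamma$ can be controlled by a Lipschitz bound in $s$. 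The construction can be repaired, but by using the equation rather than regularity of $w$: normalize $\psi(t_0)=w(x,t_0)$, and on each arc with $\gamma(0)=x$ consider $\Phi_\varepsilon(s,t)\coloneqq w(\gamma(s),t)-\psi(t)-(t-t_0)^2-s/\varepsilon$ on $[0,\delta]\times[t_0-\delta,t_0+\delta]$. Since $\Phi_\varepsilon(0,t_0)=0$, any maximum point $(s_\varepsilon,t_\varepsilon)$ satisfies $s_\varepsilon\le C\varepsilon$ and $t_\varepsilon\to t_0$; if $s_\varepsilon>0$ the branchwise subsolution inequality gives $\partial_t\psi(t_\varepsilon)+2(t_\varepsilon-t_0)+\wtH_\gamma(s_\varepsilon,1/\varepsilon)\le0$, which contradicts the uniform coercivity \labelcref{condcoerc} for small $\varepsilon$. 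Hence the maximum sits at $s=0$, i.e.\ $\Psi(\gamma(s),t)\coloneqq\psi(t)+(t-t_0)^2+s/\varepsilon$ is an admissible network supertangent to $w$ at $(x,t_0)$ with $\partial_t\Psi(x,t_0)=\partial_t\psi(t_0)$; the junction inequality of \cref{IMdef} together with $\wtF\ge-c_x$ then yields $\partial_t\psi(t_0)\le c_x$, which is the conclusion your Lipschitz argument was meant to deliver.
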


    We can actually show that, in our case, these notions of solution are equivalent:

    \begin{theo}\label{defequiv}
        A locally bounded function is a solution to~\eqref{eq:globteik}, with flux limiter $c_x$ and continuous initial datum $u_0$, in the sense of \cref{deftsubsol} if and only if it is a solution in the sense of \cref{IMdef}.
    \end{theo}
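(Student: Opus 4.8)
The plan is to leverage the two uniqueness statements, \cref{exunsolt} and \cref{flsolexun}, so that only a single implication has to be checked directly. Fix $u_0\in C(\Gamma)$ and a flux limiter $c_x$, and let $u$ be the unique solution in the sense of \cref{deftsubsol} provided by \cref{exunsolt}; in particular $u$ is continuous. If I can show that this $u$ is also a solution in the sense of \cref{IMdef} carrying the same datum, then the uniqueness in \cref{flsolexun} forces $u$ to coincide with the unique flux limited solution $\hat u$. Consequently the set of solutions in the sense of \cref{deftsubsol} (the singleton $\{u\}$ by \cref{exunsolt}) and the set of solutions in the sense of \cref{IMdef} (the singleton $\{\hat u\}$ by \cref{flsolexun}) agree, which is exactly the asserted equivalence. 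Thus the whole theorem reduces to verifying that $u$ is a flux limited solution, and in particular I never have to prove the harder converse vertex implication directly.

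That $u$ is a flux limited subsolution is immediate from \cref{equivsubsol}, since $u$ is a continuous subsolution in the sense of \cref{deftsubsol} and $u^*=u$. For the local supersolution requirement \cref{IMsupsolloc} I use that $u_*=u$ by continuity and that being a viscosity supersolution of \eqref{eq:teikg} along $\gamma$ is equivalent to being one along $\wtgamma$ after the change of variable $s\mapsto|\gamma|-s$, thanks to the compatibility relation $\wtH_{\wtgamma}(s,\mu)=\wtH_\gamma(|\gamma|-s,-\mu)$; hence Siconolfi's arc condition, stated over $\Ecal^+$, yields the Imbert--Monneau arc condition over all of $\Ecal$.

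The core of the argument is the vertex condition \cref{IMstateconst}. Let $\Phi\in C^1(\Gamma\times\Rds^+)$ be a subtangent to $u$ at $(x,t_0)$ with $x\in\Vbf$, and write $q_\gamma\coloneqq\partial_s(\Phi\circ\gamma)(0,t_0)$ for each $\gamma$ with $\gamma(0)=x$. If $\partial_t\Phi(x,t_0)\ge c_x$ there is nothing to prove, because $\wtF\ge-c_x$ forces $\partial_t\Phi(x,t_0)+\wtF(x,\partial_x\Phi(x,t_0))\ge\partial_t\Phi(x,t_0)-c_x\ge0$. Suppose then $\partial_t\Phi(x,t_0)<c_x$. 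The time slice $\psi(t)\coloneqq\Phi(x,t)$ is a $C^1$ subtangent to $u(x,\cdot)$ at $t_0$ with $\partial_t\psi(t_0)<c_x$, so \cref{stateconst} provides an arc $\gamma$, $\gamma(0)=x$, on which every constrained $C^1$ subtangent satisfies the local supersolution inequality. The restriction $\varphi(s,t)\coloneqq\Phi(\gamma(s),t)$ is a constrained subtangent to $(s,t)\mapsto u(\gamma(s),t)$ at $(0,t_0)$, since a global subtangent restricts to a one-sided subtangent on $\{s\ge0\}$. The key step is a slope reduction: replacing $\varphi$ by $\varphi(s,t)+(m-q_\gamma)s$ with $m\coloneqq\min\{q_\gamma,\whmu_\gamma\}\le q_\gamma$ adds the nonnegative term $(q_\gamma-m)s$, which vanishes at $s=0$, so it remains a constrained subtangent, now with $s$-slope $m$ and unchanged $t$-slope at $(0,t_0)$. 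Applying \cref{stateconst} to this reduced test function yields
\[
    \partial_t\Phi(x,t_0)+\wtH_\gamma\bigl(0,m\bigr)\ge0 .
\]
Since $\wtH_\gamma(0,\min\{q_\gamma,\whmu_\gamma\})=\wtH^-_\gamma(0,q_\gamma)$ by \eqref{eq:minHdef} (the two branches of $\wtH^-_\gamma$ correspond exactly to $q_\gamma\le\whmu_\gamma$ and $q_\gamma>\whmu_\gamma$), this reads $\partial_t\Phi(x,t_0)+\wtH^-_\gamma(0,q_\gamma)\ge0$, and as $\wtF(x,\partial_x\Phi(x,t_0))\ge\wtH^-_\gamma(0,q_\gamma)$ by \eqref{eq:Fvertdef} I obtain \cref{IMstateconst}.

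Finally, $u$ is continuous with $u(\cdot,0)=u_0$, so it carries the initial datum $u_0$ in the sense required by \cref{flsolexun}, and uniqueness then gives $u=\hat u$ and closes the argument. I expect the main obstacle to be precisely this vertex step: confirming that the global subtangent restricts to a genuine constrained subtangent, that the slope reduction preserves this property, and, most importantly, that the truncation built into $\wtH^-_\gamma$ is reproduced exactly by lowering the slope to $\min\{q_\gamma,\whmu_\gamma\}$ — this is what converts Siconolfi's condition, phrased with $\wtH_\gamma$ on constrained subtangents, into the Imbert--Monneau flux $\wtF$ built from $\wtH^-_\gamma$. The dichotomy $\partial_t\Phi\gtrless c_x$ must also be matched carefully with the hypothesis $\partial_t\psi(t_0)<c_x$ of \cref{stateconst} and with the outer maximum $-c_x$ appearing in $\wtF$.
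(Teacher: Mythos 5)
Your proposal is correct, and its skeleton coincides with the paper's: reduce to a single implication using \cref{exunsolt} (existence of a continuous solution in the sense of \cref{deftsubsol}) together with \cref{flsolexun} (uniqueness of flux limited solutions), settle the subsolution half via \cref{equivsubsol}, and at a vertex split according to whether $\partial_t\Phi(x,t_0)\ge c_x$ or $\partial_t\Phi(x,t_0)<c_x$, invoking \cref{deftsubsol}\ref{stateconst} in the second case. Where you genuinely diverge is the treatment of the spatial slope $q_\gamma=D_s\Phi(\gamma(0),t_0)$. The paper proves, by contradiction, that $q_\gamma\le\whmu_\gamma$ must hold whenever $\partial_t\Phi(x,t_0)<c_x$: it tests with $\ovvarphi(s,t)=\Phi(\gamma(0),t)+\whmu_\gamma s$, which is a constrained subtangent by \eqref{eq:defequiv5}, and uses the flux limiter inequality \eqref{eq:fluxlimdef} to contradict \eqref{eq:defequiv3}; once $q_\gamma\le\whmu_\gamma$ is secured, $\wtH_\gamma(0,q_\gamma)=\wtH^-_\gamma(0,q_\gamma)$ and the restriction $\Phi\circ\gamma$ itself yields the conclusion. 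You never determine where $q_\gamma$ lies: you lower the slope of the test function to $m=\min\{q_\gamma,\whmu_\gamma\}$ by passing to $\varphi(s,t)+(m-q_\gamma)s$, verify (correctly, since the term $(q_\gamma-m)s$ added to $u\circ\gamma-\varphi$ is nonnegative for $s\ge0$ and vanishes at $s=0$) that this remains a constrained subtangent, and then read the inequality $\partial_t\Phi(x,t_0)+\wtH_\gamma(0,m)\ge0$ directly from \eqref{eq:defequiv3}, identifying $\wtH_\gamma(0,m)=\wtH^-_\gamma(0,q_\gamma)$ via \eqref{eq:minHdef} and concluding through \eqref{eq:Fvertdef}. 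Your route is direct rather than by contradiction and never invokes the defining bound \eqref{eq:fluxlimdef} on $c_x$; the paper's route gives the additional (for you unnecessary) information that the case $q_\gamma>\whmu_\gamma$ is in fact impossible when $\partial_t\Phi(x,t_0)<c_x$. Both arguments hinge on the same device---a constrained subtangent whose slope is truncated at $\whmu_\gamma$---so the difference is one of logical economy rather than of substance; a small bonus of your write-up is that you also make explicit, via the compatibility relation $\wtH_{\wtgamma}(s,\mu)=\wtH_\gamma(|\gamma|-s,-\mu)$, why the arc conditions stated over $\Ecal^+$ in \cref{deftsubsol} transfer to all of $\Ecal$ as required by \cref{IMdef}, a point the paper passes over as clear.
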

    \begin{proof}
        We will show that a continuous solution $u$ in the sense of \cref{deftsubsol} is also a solution in the sense of \cref{IMdef}, then the existence of the former type of solution (\cref{exunsolt}) and the uniqueness of the latter when the initial datum is continuous (\cref{flsolexun}) will imply our claim.\\
        That the notions of continuous subsolution are equivalent is stated in \cref{equivsubsol} and a supersolution in the sense of \cref{deftsubsol} clearly satisfies \labelcref{IMsupsolloc} in \cref{IMdef}, thus we only need to prove that \labelcref{IMstateconst} in \cref{IMdef} holds for $u$. Let $\Phi\in C^1(\Gamma\times\Rds^+)$ be a subtangent to $u$ at a point $(x,t_0)\in\Vbf\times(0,\infty)$. If $\partial_t\Phi(x,t_0)\ge c_x$ then
        \begin{equation}\label{eq:defequiv1}
            0\le\partial_t\Phi(x,t_0)-c_x\le\partial_t\Phi(x,t_0)+\wtF(x,\partial_x\Phi(x,t_0)).
        \end{equation}
        If instead
        \begin{equation}\label{eq:defequiv2}
            \partial_t\Phi(x,t_0)<c_x,
        \end{equation}
        we have from \cref{deftsubsol}\ref{stateconst} that there is an arc $\gamma$ with $\gamma(0)=x$ such that
        \begin{equation}\label{eq:defequiv3}
            \partial_t\varphi(0,t_0)+\wtH_\gamma(0,D_s\varphi(0,t_0))\ge0,
        \end{equation}
        for any constrained $C^1$ subtangent $\varphi$ to $(s,t)\mapsto u(\gamma(s),t)$ at $(0,t_0)$. Clearly $(s,t)\mapsto\Phi(\gamma(s),t)$ is a constrained $C^1$ subtangent to $(s,t)\mapsto u(\gamma(s),t)$ at $(0,t_0)$, thus, if $D_s\Phi(\gamma(0),t_0)\le\whmu_\gamma$ (see~\eqref{eq:minHdef}), \eqref{eq:defequiv3} implies
        \begin{equation}\label{eq:defequiv4}
            0\le\partial_t\Phi(\gamma(0),t_0)+\wtH_\gamma^-(0,D_s\Phi(\gamma(0),t_0))\le\partial_t\Phi(x,t_0)+\wtF(x,\partial_x\Phi(x,t_0)).
        \end{equation}
        Since $(x,t_0)$ and $\Phi$ are arbitrary, and in view of~\eqref{eq:defequiv1} and~\eqref{eq:defequiv4}, to prove our claim it is enough to show that if~\eqref{eq:defequiv2} holds, then $D_s\Phi(\gamma(0),t_0)\le\whmu_\gamma$. We argue by contradiction, assuming that there is a $\delta>0$ such that
        \begin{equation}\label{eq:defequiv5}
            D_s\Phi(\gamma(s),t)>\whmu_\gamma,\qquad\text{for any }(s,t)\in[0,\delta]\times[t_0-\delta,t_0+\delta].
        \end{equation}
        Setting
        \begin{equation*}
            \ovvarphi(s,t)\coloneqq\Phi(\gamma(0),t)+\whmu_\gamma s,\qquad\text{for }(s,t)\in[0,\delta]\times[t_0-\delta,t_0+\delta],
        \end{equation*}
        \eqref{eq:defequiv5} yields
        \begin{align*}
            \ovvarphi(0,t_0)=\;&\Phi(\gamma(0),t_0)=u(\gamma(0),t_0),\\
            \ovvarphi(s,t)\le\;&\Phi(\gamma(s),t)\le u(\gamma(s),t),&&\text{in }[0,\delta]\times[t_0-\delta,t_0+\delta],
        \end{align*}
        i.e., $\ovvarphi$ is a constrained $C^1$ subtangent to $(s,t)\mapsto u(\gamma(s),t)$ at $(0,t_0)$. It follows from~\eqref{eq:fluxlimdef} and~\eqref{eq:defequiv2} that
        \begin{equation*}
            \partial_t\ovvarphi(0,t_0)+\wtH_\gamma(0,\partial_s\ovvarphi(0,t_0))=\partial_t\Phi(\gamma(0),t_0)+\wtH_\gamma(0,\whmu_\gamma)<c_x+\max_{s\in[0,|\gamma|]}\min_{\mu\in\Rds}\wtH_{\gamma}(s,\mu)\le0,
        \end{equation*}
        which is in contradiction with~\eqref{eq:defequiv3}.
    \end{proof}

    \section{Approximation of Time-Dependent Hamilton--Jacobi Equations}\label{sec:Scheme}

    Here we illustrate the semi-Lagrangian scheme introduced in~\cite{CarliniSiconolfi23} for the approximation of solutions to~\eqref{eq:globteik} and provide an error estimate.

    We require the following additional conditions on each $\wtH_\gamma$:
    \begin{Hassum}[resume]
        \item\label{condlip} $s\mapsto\wtH_\gamma(s,\mu)$ is Lipschitz continuous for any $\mu\in\Rds$;
        \item $\lim\limits_{|\mu|\to\infty}\inf\limits_{s\in[0,|\gamma|]}\dfrac{\wtH_\gamma(s,\mu)}{|\mu|}=\infty$.
    \end{Hassum}
    We point out that the $\wtH_\gamma$ are locally Lipschitz continuous by~\ref{condconv}, \labelcref{condlip} and \cite[Corollary to Proposition 2.2.6]{Clarke90}.

    \subsection{Modified Problem}

    Actually, the semi-Lagrangian scheme does not directly perform an approximation of~\eqref{eq:globteik}, but instead of a problem with modified Hamiltonians $H_\gamma$ obtained from the $\wtH_\gamma$ through the procedure described in \cite[Appendix A]{CarliniSiconolfi23}, for a suitable preliminary choice of a compact interval $I$ of the momentum variable $\mu$ with
    \begin{equation*}
        H_\gamma(s,\mu)=\wtH_\gamma(s,\mu),\qquad\text{for every }\gamma\in\Ecal,\,s\in[0,|\gamma|],\,\mu\in I.
    \end{equation*}
    The advantage is that a positive constant $\beta_0$, depending on $I$, can be found so that the modified Lagrangians
    \begin{equation*}
        L_\gamma(s,\lambda)\coloneqq\max_{\mu\in\Rds}(\lambda\mu-H_\gamma(s,\mu)),\qquad\text{for }\gamma\in\Ecal,\,s\in [0,|\gamma|],\,\lambda\in\Rds,
    \end{equation*}
    are infinite when $\lambda$ is outside $[-\beta_0,\beta_0]$, for all $s\in[0,|\gamma|]$, $\gamma\in\Ecal$, and Lipschitz continuous in $[-\beta_0,\beta_0]$. $H_\gamma$ is the convex conjugate of $L_\gamma$, thus we get the following \namecref{modHprop}.

    \begin{lem}\label{modHprop}
        Each $H_\gamma$ is uniformly Lipschitz continuous and satisfies \cref{condcoerc,condconv}.
    \end{lem}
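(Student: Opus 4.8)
The plan is to work entirely from the representation of $H_\gamma$ as the convex conjugate of $L_\gamma$, which the discussion above already provides. Since $L_\gamma(s,\cdot)\equiv+\infty$ outside $[-\beta_0,\beta_0]$ for every $s$, this conjugate reduces to a maximum over a fixed compact set,
\begin{equation*}
    H_\gamma(s,\mu)=\max_{\lambda\in[-\beta_0,\beta_0]}\bigl(\lambda\mu-L_\gamma(s,\lambda)\bigr),
\end{equation*}
and the maximum is finite and attained because $L_\gamma$, being Lipschitz continuous on the compact interval $[-\beta_0,\beta_0]$, is bounded there (uniformly in $s$). All three assertions are then read off from this finite parametrized maximum, so the first step is simply to record this formula and the boundedness $M_\gamma\coloneqq\max_{s\in[0,|\gamma|],\,\lambda\in[-\beta_0,\beta_0]}L_\gamma(s,\lambda)<\infty$.

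For \cref{condconv}, I would observe that for fixed $s$ the map $\mu\mapsto H_\gamma(s,\mu)$ is a supremum of the affine functions $\mu\mapsto\lambda\mu-L_\gamma(s,\lambda)$, hence convex in $\mu$. For \cref{condcoerc}, choosing $\lambda=\beta_0\operatorname{sign}\mu$ inside the maximum yields the lower bound
\begin{equation*}
    H_\gamma(s,\mu)\ge\beta_0|\mu|-M_\gamma,
\end{equation*}
which is linear in $|\mu|$ and uniform in $s$, giving coercivity uniformly in $s$.

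For the uniform Lipschitz continuity I would treat the two variables separately. In $\mu$, each affine function in the maximum has slope $|\lambda|\le\beta_0$, so their maximum is $\beta_0$-Lipschitz in $\mu$, uniformly in $s$. In $s$, I would apply the elementary inequality $\bigl|\max_\lambda f-\max_\lambda g\bigr|\le\max_\lambda|f-g|$ with $f(\lambda)=\lambda\mu-L_\gamma(s,\lambda)$ and $g(\lambda)=\lambda\mu-L_\gamma(s',\lambda)$, obtaining
\begin{equation*}
    |H_\gamma(s,\mu)-H_\gamma(s',\mu)|\le\max_{\lambda\in[-\beta_0,\beta_0]}|L_\gamma(s,\lambda)-L_\gamma(s',\lambda)|\le\operatorname{Lip}(L_\gamma)\,|s-s'|,
\end{equation*}
so that $H_\gamma$ inherits in $s$ the Lipschitz constant of $L_\gamma$. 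Combining the two estimates gives joint Lipschitz continuity of each $H_\gamma$, and taking the maximum of the finitely many constants over $\gamma\in\Ecal$ (recall $\Ecal$ is finite and $\beta_0$ is common) produces a single constant valid for all arcs, i.e.\ uniform Lipschitz continuity.

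The substantive point, rather than an obstacle, is that everything hinges on the two stated structural properties of $L_\gamma$: that it is $+\infty$ off $[-\beta_0,\beta_0]$ and Lipschitz on that strip. The first collapses the conjugate to a compact maximum and simultaneously forces the at-most-linear growth that makes $H_\gamma$ globally Lipschitz in $\mu$; the second transfers regularity in $s$ to $H_\gamma$. The only place requiring care is the $s$-direction, where one must use that $L_\gamma$ is Lipschitz jointly on $[0,|\gamma|]\times[-\beta_0,\beta_0]$ (so that the constant in the displayed bound is independent of $\lambda$); this is exactly the Lipschitz property of the modified Lagrangian guaranteed by the construction of~\cite{CarliniSiconolfi23} together with \cref{condlip}.
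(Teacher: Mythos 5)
Your proof is correct and rests on the same central tool as the paper's: the identity $H_\gamma(s,\mu)=\max_{|\lambda|\le\beta_0}(\lambda\mu-L_\gamma(s,\lambda))$, which is precisely what the paper displays to justify the uniform Lipschitz continuity. The only difference is one of self-containedness: for \cref{condconv,condcoerc} the paper simply cites~\cite{CarliniSiconolfi23}, whereas you derive both from the same identity --- convexity because $H_\gamma(s,\cdot)$ is a maximum of affine functions of $\mu$, and coercivity from the admissible choice $\lambda=\beta_0\operatorname{sign}\mu$, which gives $H_\gamma(s,\mu)\ge\beta_0|\mu|-M_\gamma$ uniformly in $s$, with $M_\gamma<\infty$ guaranteed by the Lipschitz bound on $L_\gamma$ over the compact strip $[0,|\gamma|]\times[-\beta_0,\beta_0]$. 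Your two-variable splitting of the Lipschitz estimate (slope bound $\beta_0$ in $\mu$; the inequality $\left|\max_\lambda f-\max_\lambda g\right|\le\max_\lambda|f-g|$ in $s$) spells out exactly the argument the paper leaves implicit behind the displayed identity, so nothing is missing; what your version buys is a proof of the full statement that does not outsource the convexity and coercivity claims to the external reference.
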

    \begin{proof}
        That each $H_\gamma$ satisfies \cref{condcoerc,condconv} is shown in~\cite{CarliniSiconolfi23}. The Lipschitz continuity is a consequence of the following identity:
        \begin{equation*}
            H_\gamma(s,\mu)=\max_{|\lambda|\le\beta_0}(\lambda\mu-L_\gamma(s,\lambda)),\qquad\text{for any }\gamma\in\Ecal,\,(s,\mu)\in[0,|\gamma|]\times\Rds.
        \end{equation*}
    \end{proof}

    Using the $H_\gamma$ and fixed a flux limiter $c_x$, we further define the $H_\gamma^-$ and $F$ as in~\eqref{eq:minHdef} and~\eqref{eq:Fvertdef}, respectively.

    Setting $\Hcal\coloneqq\{H_\gamma\}_{\gamma\in\Ecal}$, we consider the problem
    \begin{equation}\tag{$\Hcal$Jmod}\label{eq:globteikmod}
        \partial_t u(x,t)+\Hcal(x,D_x u)=0,\qquad\text{on }\Gamma\times(0,\infty),
    \end{equation}
    in place of~\eqref{eq:globteik}. The rationale behind this change is due to the next \namecref{solmod}.

    \begin{theo}\label{solmod}
        Let $c_x$ be a flux limiter and $u_0$ be a Lipschitz continuous initial datum. For a suitable choice of the interval $I$, which depends upon $c_x$ and $u_0$, the solutions to~\eqref{eq:globteik} and~\eqref{eq:globteikmod} with flux limiter $c_x$ and initial datum $u_0$ coincide.
    \end{theo}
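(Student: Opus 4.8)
The plan is to reduce the statement to the uniqueness results already available, showing that a single inclusion suffices. By \cref{modHprop} the modified Hamiltonian $\Hcal$ satisfies \cref{condconv,condcoerc} and is continuous, so \cref{exunsolt} applies verbatim to~\eqref{eq:globteikmod} and produces a unique Lipschitz solution $\bar u$ with flux limiter $c_x$ and datum $u_0$ (that $c_x$ is still admissible for $\Hcal$ in the sense of~\eqref{eq:fluxlimdef} will follow from the choice of $I$ below). It is therefore enough to prove that the solution $u$ to~\eqref{eq:globteik} furnished by \cref{exunsolt} is \emph{also} a solution to~\eqref{eq:globteikmod} in the sense of \cref{deftsubsol}: uniqueness for~\eqref{eq:globteikmod} then forces $u=\bar u$. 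I would carry out the whole argument directly within \cref{deftsubsol}, so that neither \cref{defequiv} nor the flux limited formulation is needed here.

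The choice of $I$ is governed solely by a priori bounds on $u$. Let $L_x$ and $L_t$ denote the spatial and temporal Lipschitz constants of $u$ given by \cref{exunsolt}; crucially these depend only on $u_0$, $c_x$ and $\wtHcal$, not on $I$. Since the $\wtH_\gamma$ are convex and coercive uniformly in $s$ by \cref{condconv,condcoerc}, the minimizers of $\mu\mapsto\wtH_\gamma(s,\mu)$ are bounded uniformly over $s\in[0,|\gamma|]$ and $\gamma\in\Ecal$, and $\wtH_\gamma(0,\mu)\to\infty$ as $\mu\to-\infty$. I would thus fix $I=[-R,R]$ with $R$ large enough that $[-L_x,L_x]\subset I$, that all these minimizers lie in the interior of $I$, and that $\wtH_\gamma(0,-R)\ge L_t$ for every $\gamma$. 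With this choice $H_\gamma=\wtH_\gamma$ on $[0,|\gamma|]\times I$ by construction; moreover, $H_\gamma$ being convex and agreeing with $\wtH_\gamma$ on $I$, its minimum over $\mu$ is attained at the interior minimizer and equals that of $\wtH_\gamma$, so $\min_\mu H_\gamma(s,\mu)=\min_\mu\wtH_\gamma(s,\mu)$ for all $s$ and $c_x$ is indeed an admissible flux limiter for $\Hcal$. In particular $\whmu_\gamma\in I$.

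With $I$ fixed, the transfer of the conditions of \cref{deftsubsol} from~\eqref{eq:globteik} to~\eqref{eq:globteikmod} is immediate away from the delicate case. On the interior of each arc the gradient of a supertangent or subtangent to an $L_x$-Lipschitz function at a contact point is bounded by $L_x$ in absolute value, hence lies in $I$, where $H_\gamma=\wtH_\gamma$; thus the local subsolution and supersolution properties pass over unchanged. The subsolution condition at a vertex involves only the flux limiter $c_x$ and is literally the same for both problems. This leaves the supersolution condition \cref{deftsubsol}\ref{stateconst} at the vertices.

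The hard part is precisely this vertex supersolution condition, because a constrained subtangent $\varphi$ to $(s,t)\mapsto u(\gamma(s),t)$ at $(0,t_0)$ only has $\partial_s\varphi(0,t_0)\le L_x$ from the one-sided constraint, so $\partial_s\varphi(0,t_0)$ may be arbitrarily negative and fall outside $I$, where $H_\gamma$ and $\wtH_\gamma$ need not coincide. Given a time subtangent with slope below $c_x$, I would select the same arc $\gamma$ that works for~\eqref{eq:globteik}, and then argue by cases on $\mu\coloneqq\partial_s\varphi(0,t_0)$. If $\mu\ge-R$ then $\mu\in I$, so $H_\gamma(0,\mu)=\wtH_\gamma(0,\mu)$ and the inequality $\partial_t\varphi(0,t_0)+H_\gamma(0,\mu)\ge0$ is inherited from the one for $\wtH_\gamma$. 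If instead $\mu<-R$, then since $\varphi(0,\cdot)$ is an ordinary (two-sided) time subtangent to $u(x,\cdot)$ we have $|\partial_t\varphi(0,t_0)|\le L_t$, while convexity of $H_\gamma$ to the left of its interior minimizer gives $H_\gamma(0,\mu)\ge H_\gamma(0,-R)=\wtH_\gamma(0,-R)\ge L_t\ge-\partial_t\varphi(0,t_0)$, so the inequality holds trivially by coercivity. This completes the verification that $u$ solves~\eqref{eq:globteikmod}, and uniqueness yields $u=\bar u$.
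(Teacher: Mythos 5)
Your proof is correct, but it follows a genuinely different route from the paper's. The paper disposes of \cref{solmod} in two sentences: it cites \cite[Theorem 2.7]{CarliniSiconolfi23}, where the analogous statement is proved for problems on a finite horizon $(0,T)$, and observes that since $T$ is arbitrary and the modified Hamiltonians do not depend on $T$, the claim extends to all times. You instead give a direct, self-contained verification inside \cref{deftsubsol}: you pick $I=[-R,R]$ from the a priori space/time Lipschitz constants $L_x,L_t$ of the solution $u$ to~\eqref{eq:globteik} (these depend only on $u_0$, $c_x$ and $\wtHcal$, so there is no circularity), check that $c_x$ remains an admissible flux limiter for $\Hcal$ because the minima of $H_\gamma(s,\cdot)$ and $\wtH_\gamma(s,\cdot)$ coincide for every $s$, transfer the interior conditions via the bound $|\partial_s\varphi|\le L_x$ at two-sided contact points, and correctly isolate the only real difficulty---constrained subtangents at a vertex, whose spatial slope is bounded above by $L_x$ but not below---which you settle by the dichotomy $\partial_s\varphi(0,t_0)\ge-R$ (where $H_\gamma=\wtH_\gamma$ and the inequality is inherited) versus $\partial_s\varphi(0,t_0)<-R$ (where $|\partial_t\varphi(0,t_0)|\le L_t$ and $H_\gamma(0,\mu)\ge H_\gamma(0,-R)=\wtH_\gamma(0,-R)\ge L_t$); uniqueness for~\eqref{eq:globteikmod}, available through \cref{modHprop,exunsolt}, then closes the argument. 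What the paper's approach buys is brevity, outsourcing all the work to \cite{CarliniSiconolfi23}; what yours buys is transparency: it exhibits explicitly how $I$ must depend on $c_x$ and $u_0$, and why the delicate vertex supersolution condition \cref{deftsubsol}\ref{stateconst} survives the modification, using only properties of $\Hcal$ that the paper itself states (\cref{modHprop} and agreement with $\wtHcal$ on $I$). Two minor imprecisions, neither fatal: your step ``$\mu\ge-R$ implies $\mu\in I$'' tacitly uses the upper bound $\partial_s\varphi(0,t_0)\le L_x\le R$, which you did establish earlier; and in the second case the inequality $H_\gamma(0,\mu)\ge H_\gamma(0,-R)$ rests on convexity of $H_\gamma$ together with its minimizer lying in the interior of $I$ (coercivity of $\wtH_\gamma$ enters only in the choice of $R$), so attributing it ``trivially to coercivity'' slightly mislabels the mechanism.
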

    \begin{proof}
        A similar result is proved in \cite[Theorem 2.7]{CarliniSiconolfi23} for problems defined on a finite time interval $(0,T)$. Since the time $T$ is arbitrary and the modified Hamiltonians do not depend on it, this proves our claim.
    \end{proof}

    \subsection{Discretization}

    We describe the space-time discretization on which the approximation scheme is based.

    We start by fixing $T>0$, a spatial and a time step denoted by $\Delta x$ and $\Delta t$, respectively, and setting $\Delta\coloneqq(\Delta x,\Delta t)$. We will say that $\Delta$ is an \emph{admissible pair} for $T$ if
    \begin{equation*}
        0<\Delta x<|\gamma|,\quad\text{for any }\gamma\in\Ecal^+,\qquad\text{and}\qquad0<\Delta t<T.
    \end{equation*}

    We further define, for any admissible $\Delta$,
    \begin{equation*}
        N^\Delta_\gamma\coloneqq\left\lceil\frac{|\gamma|}{\Delta x}\right\rceil,\quad\text{for }\gamma\in\Ecal^+,\qquad\text{and}\qquad N^\Delta_T\coloneqq\left\lceil\frac T{\Delta t}\right\rceil,
    \end{equation*}
    where $\lceil\cdot\rceil$ stands for the ceiling function.

    Next we set
    \begin{equation*}
        s^{\Delta,\gamma}_i\coloneqq i\frac{|\gamma|}{N^\Delta_\gamma},\quad\text{for }i\in\left\{0,\dotsc,N_\gamma^\Delta\right\},\,\gamma\in\Ecal^+,\qquad t^\Delta_m\coloneqq m\frac T{N^\Delta_T},\quad\text{for }m\in\left\{0,\dotsc,N_T^\Delta\right\}.
    \end{equation*}
    We have therefore associated to $\Delta$ and any arc $\gamma\in\Ecal^+$, the partition of the parameter interval $[0,|\gamma|]$ in $N^\Delta_\gamma$ subintervals with length less than or equal to $\Delta x$, denoted by $\Delta_\gamma x$, and similarly a partition of the time interval $[0,T]$ in $N_T^\Delta$ subintervals all of them with length less than or equal to $\Delta t$, denoted by $\overline{\Delta t}$.

    \begin{rem}
        It is apparent that each $\Delta_\gamma x$ and $\overline{\Delta t}$ have the same order of $\Delta x$ and $\Delta t$, respectively. Hereafter we will rarely mention $\Delta_\gamma x$, since most of the time we will just use the fact that $\Delta_\gamma x\le\Delta x$ without explicitly mentioning. Conversely, $\overline{\Delta t}$ plays a central role in the definition of the numerical scheme and in our analysis, while $\Delta t$ is actually never used. Therefore, for the sake of notation, we just assume that $\overline{\Delta t}=\Delta t$.
    \end{rem}

    We proceed introducing the space-time grids associated to any $\gamma\in\Ecal^+$:
    \begin{equation*}
        \Scal_{\Delta,\gamma}\coloneqq\left\{s_i^{\Delta,\gamma}:i\in\left\{0,\dotsc,N_\gamma^\Delta\right\}\right\},\qquad\Tcal_\Delta^T\coloneqq\left\{t^\Delta_m:m\in\left\{0,\dotsc,N^\Delta_T\right\}\right\}
    \end{equation*}
    and
    \begin{equation*}
        \Gamma^0_\Delta\coloneqq\bigcup_{\gamma\in\Ecal^+}\gamma(\Scal_{\Delta,\gamma}),\qquad\Gamma_\Delta^T\coloneqq\Gamma_\Delta^0\times\Tcal^T_\Delta.
    \end{equation*}

    To ease notation, henceforth we omit the index $\Delta$ from the above formulas.

    We denote by $B\left(\Gamma^0\right)$ and $B(\Scal_\gamma)$, for $\gamma\in\Ecal^+$, the spaces of functions from $\Gamma^0$ and $\Scal_\gamma$, respectively, to $\Rds$.

    Given an arc $\gamma\in\Ecal^+$ and $f\in B(\Scal_\gamma)$ we define the operator
    \begin{equation}\label{eq:S_gamma}
        S_\gamma[f](s)\coloneqq\min_{\frac{s-|\gamma|}{\Delta t}\le\lambda\le\frac s{\Delta t}}\{I_\gamma[f](s-\Delta t\lambda)+\Delta tL_\gamma(s,\lambda)\},\qquad\text{for }s\in\Scal_\gamma,
    \end{equation}
    where $I_\gamma$ is the \emph{interpolating polynomial} of degree 1 defined by
    \begin{equation*}
        I_\gamma[f](s)\coloneqq f(s_i)+\frac{s-s_i}{s_{i+1}-s_i}(f(s_{i+1})-f(s_i)),
    \end{equation*}
    with $s_i,s_{i+1}\in\Scal_\gamma$ and $s\in[s_i,s_{i+1}]$. We recall the following result for the linear interpolating polynomial in dimension 1 (see, e.g., \cite[Lemma 4.2]{CarliniSiconolfi23}):

    \begin{lem}\label{lipinterpol}
        If $f$ is Lipschitz continuous, then $I_\gamma[f]$ is Lipschitz continuous as well with the same Lipschitz constant.
    \end{lem}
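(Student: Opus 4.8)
The plan is to exploit the fact that $I_\gamma[f]$ is, by construction, a continuous piecewise affine function on $[0,|\gamma|]$ whose breakpoints are exactly the nodes of the partition $\Scal_\gamma$. Since a continuous piecewise affine function is Lipschitz precisely when its slopes are uniformly bounded, the whole argument reduces to bounding the slope on each subinterval by the Lipschitz constant $L$ of $f$.

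First I would fix a subinterval $[s_i,s_{i+1}]$ of the partition. On it the formula defining $I_\gamma[f]$ is affine in $s$ with slope
\[
    \frac{f(s_{i+1})-f(s_i)}{s_{i+1}-s_i},
\]
and the Lipschitz hypothesis on $f$ gives at once $|f(s_{i+1})-f(s_i)|\le L(s_{i+1}-s_i)$, so the absolute value of this slope is at most $L$. Thus $I_\gamma[f]$ is $L$-Lipschitz on each subinterval.

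To pass from this local bound to a global one, I would take two arbitrary points $s\le s'$ in $[0,|\gamma|]$, insert between them the finitely many nodes $s_j,\dotsc,s_k$ of $\Scal_\gamma$ that they straddle, and write the increment as a telescoping sum over the consecutive segments $[s,s_j],[s_j,s_{j+1}],\dotsc,[s_k,s']$. Applying the per-subinterval bound from the previous step to each term and summing via the triangle inequality yields $|I_\gamma[f](s')-I_\gamma[f](s)|\le L(s'-s)$, which is the claim. Equivalently, one may note that $I_\gamma[f]$ is absolutely continuous with $|(I_\gamma[f])'|\le L$ almost everywhere, and integrate the derivative over $[s,s']$.

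There is no genuine obstacle here: the only point requiring mild care is the gluing step across subintervals, and even that is routine, since the triangle inequality (or integration of the a.e.\ derivative) absorbs the local slope bounds without any loss in the constant. The essential content is simply that a difference quotient of a Lipschitz function never exceeds its Lipschitz constant.
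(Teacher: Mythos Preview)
Your argument is correct and is exactly the standard one: each affine piece has slope bounded by the Lipschitz constant of $f$, and a telescoping over the nodes (or integration of the a.e.\ derivative) transports this to a global bound without loss. There is nothing to compare against, since the paper does not supply its own proof but merely cites \cite[Lemma 4.2]{CarliniSiconolfi23}; your write-up would serve as a self-contained justification of that citation.
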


    We extend $S_\gamma$ to $B\left(\Gamma^0\right)$ setting the map $S:B\left(\Gamma^0\right)\to B\left(\Gamma^0\right)$ such that
    \begin{mylist}
        \item if $x\in\Gamma^0\setminus\Vbf$ let $\gamma\in\Ecal^+_x$ and $s\in\Scal_\gamma$ be such that $x=\gamma(s)$, then
        \begin{equation*}
            S[f](x)\coloneqq S_\gamma[f\circ\gamma](s);
        \end{equation*}
        \item if $x\in\Vbf$ we set $S$ through a two steps procedure:
        \begin{align*}
            \wtS[f](x)\coloneqq\,&\min\left\{S_\gamma[f]\left(\gamma^{-1}(x)\right):\gamma\in\Ecal_x^+\right\},\\
            S[f](x)\coloneqq\,&\min\left\{\wtS[f](x),f(x)+c_x\Delta t\right\}.
        \end{align*}
    \end{mylist}

    Now we consider the following evolutive problem:
    \begin{equation}\label{eq:globteikdisc}\tag{Discr}
        \left\{
        \begin{aligned}
            v(x,0)=\,&v_0(x),&&\text{if }x\in\Gamma^0,\\
            v(x,t)=\,&S[v(\cdot,t-\Delta t)](x),&&\text{if }t\ne0,\,(x,t)\in\Gamma^T,
        \end{aligned}
        \right.
    \end{equation}
    where $v_0:\Gamma\to\Rds$ is Lipschitz continuous.

    It is shown in \cite[Theorem 9.1]{CarliniSiconolfi23} that the solution to~\eqref{eq:globteikdisc} converges to the corresponding solution to~\eqref{eq:globteik}. We will provide in the next section an error estimate of this approximation.

    \subsection{Error Estimate}

    In order to provide an error estimate for the semi-Lagrangian scheme, we apply \cite[Theorem 4.3]{CarliniFestaForcadel20}. To make the paper self-contained, we restate it here as \cref{SLerr}.

    Fixed a flux limiter $c_x$, we denote with $\ovS$ a discrete numerical operator from $B\left(\Gamma^0\right)$ into itself and consider the following evolutive problem:
    \begin{equation}\label{eq:discsol}
        \left\{
        \begin{aligned}
            w(x,0)=\,&w_0(x),&&\text{if }x\in\Gamma^0,\\
            w(x,t)=\,&\ovS[w(\cdot,t-\Delta t)](x),&&\text{if }t\ne0,\,(x,t)\in\Gamma^T,
        \end{aligned}
        \right.
    \end{equation}
    where $w_0\in B\left(\Gamma^0\right)$.

    \begin{defin}
        Let $\Delta$ be an admissible pair. We say that the scheme~\eqref{eq:discsol} satisfies a \emph{consistency estimate} $E(\Delta)$ if we have,
        \begin{mynum}
            \item for any $\varphi\in C^2(\Gamma)$, $\gamma\in\Ecal^+$ and $s\in\Scal_\gamma\setminus\{0,|\gamma|\}$,
            \begin{equation*}
                \left|\frac{\varphi(\gamma(s))-\ovS[\varphi](\gamma(s))}{\Delta t}-H_\gamma(s,D(\varphi\circ\gamma)(s))\right|\le\max_{\gamma\in\Ecal}\left\|D^2(\varphi\circ\gamma)\right\|_\infty E(\Delta),
            \end{equation*}
            where $\|\cdot\|_\infty$ denotes the maximum norm;
            \item for any $\varphi\in C^2(\Gamma)$ and $x\in\Vbf$,
            \begin{equation*}
                \left|\frac{\varphi(x)-\ovS[\varphi](x)}{\Delta t}-F(x,\partial\varphi(x))\right|\le\max_{\gamma\in\Ecal}\left\|D^2(\varphi\circ\gamma)\right\|_\infty E(\Delta).
            \end{equation*}
            We recall that $F$ is defined as in~\eqref{eq:Fvertdef}, using $H_\gamma$ instead of $\wtH_\gamma$.
        \end{mynum}
    \end{defin}

    Let us focus on \cite[Theorem 4.3]{CarliniFestaForcadel20}. It requires the following conditions on each Lagrangian $L_\gamma$:
    \begin{Lassum}
        \item $L_\gamma$ is strictly convex, with respect to the second argument, and uniformly Lipschitz continuous;
        \item $\lim\limits_{|\lambda|\to\infty}\inf\limits_{s\in[0,|\gamma|]}\dfrac{L_\gamma(s,\lambda)}{|\lambda|}=\infty$;
        \item for all $\alpha>0$ there is a $C_\alpha>0$ such that
        \begin{equation*}
            \sup_{s\in[0,1]}\left|\inf_{|\lambda|\le\alpha}L_\gamma(s,\lambda)\right|\le C_\alpha.
        \end{equation*}
    \end{Lassum}
    We point out the proof of \cite[Theorem 4.3]{CarliniFestaForcadel20} exploits some consequences of these conditions (the Lipschitz continuity of the solutions to the evolutive problem and of the Hamiltonians $H_\gamma$) but are never directly used. Indeed, it is enough to assume~\ref{condconv}, \labelcref{condcoerc} and
    \begin{Hassum}[resume]
        \item each $H_\gamma$ is uniformly Lipschitz continuous.
    \end{Hassum}
    We recall that $\Hcal$ satisfies these conditions by \cref{modHprop}, while \cref{exunsolt} yields the Lipschitz continuity of the solutions to~\eqref{eq:globteikmod} whenever their initial datum is Lipschitz continuous. Moreover, even if~\cite{CarliniFestaForcadel20} employs the notion of solution given in \cref{IMdef}, \cref{defequiv} ensures that such notion is equivalent to \cref{deftsubsol}, which is employed by the scheme~\eqref{eq:globteikdisc}. These facts are summarized by the next \namecref{SLerr}.

    \begin{theo}\label{SLerr}
        Fixed a flux limiter $c_x$ and an admissible pair $\Delta$, let $u$ and $w$ be the solutions to~\eqref{eq:globteikmod} and~\eqref{eq:discsol} with Lipschitz continuous initial data $u_0$ and $w_0$, respectively, and flux limiter $c_x$. We further assume that the scheme
        \begin{mynum}
            \item is \emph{monotone}, i.e., given $g_1,g_2\in B\left(\Gamma^0\right)$ such that $g_1\le g_2$, we have $\ovS[g_1]\le\ovS[g_2]$;
            \item is \emph{invariant by addiction of constants}, i.e., $\ovS[g+A]=\ovS[g]+A$ for any constant $A$ and $g\in B\left(\Gamma^0\right)$;
            \item\label{en:SLerr3} is \emph{stable}, i.e., there exists a constant $K$, independent of $\Delta$, such that
            \begin{equation*}
                |w(x,t)-w_0(x)|\le Kt,\qquad\text{for any }(x,t)\in\Gamma^T;
            \end{equation*}
            \item satisfies a \emph{consistency estimate} $E(\Delta)$.
        \end{mynum}
        Then there exists a constant $C>0$, independent of $\Delta$, such that
        \begin{equation}\label{eq:SLerr.1}
            \max_{(x,t)\in\Gamma^T}|w(x,t)-u(x,t)|\le CT\left(\frac{E(\Delta)}{\sqrt{\Delta t}}+\sqrt{\Delta t}\right)+\max_{x\in\Gamma^0}|w_0(x)-u_0(x)|.
        \end{equation}
    \end{theo}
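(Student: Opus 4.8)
The plan is to read \cref{SLerr} as the transcription, into our framework, of the abstract error estimate \cite[Theorem 4.3]{CarliniFestaForcadel20}, so that proving it amounts to checking that the hypotheses of that result are available here and that its conclusion is stated for the notion of solution we use. First I would keep the four listed properties of the scheme---monotonicity, invariance by addition of constants, stability, and the consistency estimate $E(\Delta)$---as standing hypotheses, since they are precisely the quantitative inputs of the Crandall--Lions type comparison underlying the cited theorem and are assumed verbatim in its statement.

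Next I would deal with the structural assumptions on the data. The argument in \cite{CarliniFestaForcadel20} is formulated under conditions (L1)--(L3) on the Lagrangians $L_\gamma$, but, as already observed, it invokes only two of their consequences: the uniform Lipschitz continuity of each $H_\gamma$ and the Lipschitz continuity of the continuous solution. The first is provided by \cref{modHprop}, which also secures \cref{condconv,condcoerc} for $\Hcal$; the second follows from \cref{exunsolt}, applied to \eqref{eq:globteikmod}---whose Hamiltonians enjoy the continuity, convexity and coercivity that \cref{exunsolt} requires---as soon as the initial datum $u_0$ is Lipschitz continuous. Consequently one may replace (L1)--(L3) by \cref{condconv,condcoerc} and the uniform Lipschitz continuity of the $H_\gamma$ without touching the proof or its output \eqref{eq:SLerr.1}.

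The conceptual crux is to bridge the two notions of solution. The estimate in \cite{CarliniFestaForcadel20} is derived for the flux limited solution of \cref{IMdef}, while the scheme \eqref{eq:discsol} and the limit object $u$ solving \eqref{eq:globteikmod} are understood in the sense of \cref{deftsubsol}. Here I would appeal to \cref{defequiv}, whose proof applies unchanged to the modified problem because $\Hcal$ satisfies the very hypotheses used there: it certifies that the two solutions coincide, so the bound established for the \cref{IMdef} solution is automatically a bound for $u$, and \eqref{eq:SLerr.1} follows.

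The genuinely hard content is not in this bookkeeping but in what \cite[Theorem 4.3]{CarliniFestaForcadel20} already packages: the doubling-of-variables estimate comparing $u$ and $w$, whose delicate point is the behaviour at the vertices, where the flux limited Hamiltonian $F$ and the junction test functions dictate the rate. The main obstacle for us is therefore conceptual rather than computational---to verify that this machinery survives under the relaxed regularity assumptions and that, through \cref{defequiv}, it pertains to the solution notion of \cref{deftsubsol} driving the scheme.
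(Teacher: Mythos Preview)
Your proposal is correct and mirrors the paper's own treatment: the paper does not prove \cref{SLerr} from scratch but restates \cite[Theorem 4.3]{CarliniFestaForcadel20}, and the surrounding discussion consists precisely of the three checks you list---replacing (L1)--(L3) by their operative consequences via \cref{modHprop} and \cref{exunsolt}, and invoking \cref{defequiv} to reconcile the solution notions. There is no additional argument in the paper beyond this bookkeeping, so your reading is exactly the intended one.
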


    \begin{rem}\label{CdepT}
        More precisely, see \cite[Theorem 4.3]{CarliniFestaForcadel20}, we have that the constant $C$ in \cref{SLerr} depends upon the Lipschitz constant of $u$, the stability constant $K$ in~\ref{en:SLerr3}, the Hamiltonian $\Hcal$, the flux limiter $c_x$ and the quantity $\|w_0-u_0\|_\infty$. It is not specified if $C$ is also independent of $T$, however, fixed a $\ovT>0$, $C$ can be chosen so that~\eqref{eq:SLerr.1} holds for all $T\le\ovT$.
    \end{rem}

    Let us focus on the semi-Lagrangian scheme. The first step for proving an error estimate using \cref{SLerr} is to show that the scheme is monotone, invariant and stable.

    \begin{lem}\label{SLstable}
        The semi-Lagrangian scheme~\eqref{eq:globteikdisc} is
        \begin{mynum}
            \item\label{en:SLstable1} \emph{monotone}, i.e., given $g_1,g_2\in B\left(\Gamma^0\right)$ such that $g_1\le g_2$, we have $S[g_1]\le S[g_2]$;
            \item\label{en:SLstable2} \emph{invariant by addiction of constants}, i.e., $S[g+A]=S[g]+A$ for any constant $A$ and $g\in B\left(\Gamma^0\right)$;
            \item \emph{stable}, i.e., let $v$ be the solution to~\eqref{eq:globteikdisc} with $\ell$--Lipschitz continuous datum $v_0$, then there exists a constant $K>0$, which depends only on $\ell$, the Hamiltonian $\Hcal$ and the flux limiter $c_x$, such that
            \begin{equation}\label{eq:SLstable.1}
                |v(x,t)-v_0(x)|\le Kt,\qquad\text{for any }(x,t)\in\Gamma^T.
            \end{equation}
        \end{mynum}
    \end{lem}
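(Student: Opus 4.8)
The plan is to establish the three properties in sequence, obtaining monotonicity and invariance directly from the structure of $S$ and then bootstrapping them into the stability bound, so that no separate Lipschitz-preservation property of the scheme is needed.

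For \labelcref{en:SLstable1} and \labelcref{en:SLstable2} I would start from the linear interpolation $I_\gamma$. Since $I_\gamma[f]$ is a convex combination of two nodal values with nonnegative weights, $g_1\le g_2$ forces $I_\gamma[g_1]\le I_\gamma[g_2]$, and likewise $I_\gamma[f+A]=I_\gamma[f]+A$ because the weights sum to one. Inserting these into the definition~\eqref{eq:S_gamma}, for each fixed admissible $\lambda$ the quantity $I_\gamma[f](s-\Delta t\lambda)+\Delta t\,L_\gamma(s,\lambda)$ is monotone in $f$ and shifts by $A$ under $f\mapsto f+A$; taking the minimum over $\lambda$ preserves both features, so $S_\gamma[g_1]\le S_\gamma[g_2]$ and $S_\gamma[f+A]=S_\gamma[f]+A$. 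The vertex rule builds $S[f](x)$ from the $S_\gamma$ and from $f(x)+c_x\Delta t$ via two nested minima, each of which is again monotone and commutes with adding a constant, so both properties extend to $S$ on all of $\Gamma^0$.

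The heart of the lemma is the stability estimate~\eqref{eq:SLstable.1}, and my first step would be a one-step bound: if $g$ is $\ell$--Lipschitz on $\Gamma^0$ then $\|S[g]-g\|_\infty\le K\Delta t$ with $K$ depending only on $\ell$, $\Hcal$ and $c_x$. At an interior node $\gamma(s)$ the upper bound follows by testing~\eqref{eq:S_gamma} with the admissible value $\lambda=0$, giving $S_\gamma[g](s)\le g(\gamma(s))+\Delta t\,L_\gamma(s,0)$, where $L_\gamma(s,0)=-\min_\mu H_\gamma(s,\mu)$ is bounded uniformly in $s$ by the coercivity and continuity of $H_\gamma$. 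For the lower bound I would use that $L_\gamma(s,\cdot)=+\infty$ outside $[-\beta_0,\beta_0]$, so only $|\lambda|\le\beta_0$ contributes to the minimum; there \cref{lipinterpol} makes $I_\gamma[g]$ $\ell$--Lipschitz and equal to $g$ at the nodes, whence $I_\gamma[g](s-\Delta t\lambda)\ge g(\gamma(s))-\ell\beta_0\Delta t$, while $L_\gamma(s,\lambda)$ is bounded below on the compact momentum window $[-\beta_0,\beta_0]$. At a vertex the additional comparison against $g(x)+c_x\Delta t$ contributes at worst an $|c_x|\Delta t$ term, and $\lambda=0$ is still admissible at the endpoints, so the same estimate holds with $K$ also absorbing $\max_{x\in\Vbf}|c_x|$.

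The final step promotes the one-step bound to~\eqref{eq:SLstable.1} using \labelcref{en:SLstable1} and \labelcref{en:SLstable2}. Writing $v(\cdot,m\Delta t)=S^m[v_0]$, the one-step bound reads $v_0-K\Delta t\le S[v_0]\le v_0+K\Delta t$; applying $S$ and invoking monotonicity together with invariance by constants turns the per-step increment bound $S^{m-1}[v_0]-K\Delta t\le S^m[v_0]\le S^{m-1}[v_0]+K\Delta t$ into the same bound one level higher, so $\|S^{m+1}[v_0]-S^m[v_0]\|_\infty\le K\Delta t$ for every $m$. Telescoping then yields $\|S^m[v_0]-v_0\|_\infty\le mK\Delta t=Kt$ at $t=m\Delta t$, which is exactly~\eqref{eq:SLstable.1}. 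I expect the one-step bound to be the only delicate point: the subtlety is to exploit the compact effective domain $[-\beta_0,\beta_0]$ of $L_\gamma$ so that the interpolation's Lipschitz contribution $\ell|\lambda|\Delta t$ stays controlled, after which the telescoping is routine precisely because monotonicity and constant-invariance let the single Lipschitz estimate on $v_0$ propagate to all iterates without any further regularity.
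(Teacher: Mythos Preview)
Your proposal is correct and follows essentially the same route as the paper: items \labelcref{en:SLstable1} and \labelcref{en:SLstable2} are read off from the structure of $I_\gamma$, $S_\gamma$ and the vertex rule, and stability is obtained by combining a one-step bound $|S[v_0](x)-v_0(x)|\le K\Delta t$ (using that the effective minimization in~\eqref{eq:S_gamma} stays in $[-\beta_0,\beta_0]$ together with \cref{lipinterpol}) with the non-expansiveness of $S$ that follows from monotonicity plus constant-invariance, then telescoping. The only cosmetic difference is that you split the one-step estimate into an upper bound via $\lambda=0$ and a lower bound via the minimizer, whereas the paper bounds $|S[v_0](x)-v_0(x)|$ directly at the minimizer; the resulting constant $K=\max\bigl\{\max_{x\in\Vbf}|c_x|,\ \ell\beta_0+\max_{\gamma,s,|\lambda|\le\beta_0}|L_\gamma(s,\lambda)|\bigr\}$ is the same.
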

    \begin{proof}
        \Cref{en:SLstable1,en:SLstable2} are apparent. Moreover, given $g_1,g_2\in B\left(\Gamma^0\right)$ and setting $A_0\coloneqq\max\limits_{x\in\Gamma^0}|g_1(x)-g_2(x)|$,
        \begin{equation*}
            S[g_1]\le S[g_2+A_0]=S[g_2]+A_0,
        \end{equation*}
        thereby, for any $g_1,g_2\in B\left(\Gamma^0\right)$,
        \begin{equation}\label{eq:SLstable1}
            \max_{x\in\Gamma^0}|S[g_1](x)-S[g_2](x)|\le\max_{x\in\Gamma^0}|g_1(x)-g_2(x)|.
        \end{equation}
        Thanks to~\eqref{eq:SLstable1} we get, for any $0<i<N_T$,
        \begin{align*}
            \max_{x\in\Gamma^0}|v(x,(i+1)\Delta t)-v(x,i\Delta t)|=\;&\max_{x\in\Gamma^0}|S[v(\cdot,i\Delta t)](x)-S[v(\cdot,(i-1)\Delta t)](x)|\\
            \le\;&\max_{x\in\Gamma^0}|v(x,i\Delta t)-v(x,(i-1)\Delta t)|\le\max_{x\in\Gamma^0}|v(x,\Delta t)-v_0(x)|.
        \end{align*}
        It then follows, for any $0<m\le N_T$,
        \begin{equation*}
            \max_{x\in\Gamma^0}|v(x,m\Delta t)-v_0(x)|\le\sum_{i=1}^m\max_{x\in\Gamma^0}|v(x,i\Delta t)-v(x,(i-1)\Delta t)|\le m\max_{x\in\Gamma^0}|v(x,\Delta t)-v_0(x)|.
        \end{equation*}
        Hence, to conclude, it is enough to show that
        \begin{equation}\label{eq:SLstable2}
            |v(x,\Delta t)-v_0(x)|\le K\Delta t,\qquad\text{for any }x\in\Gamma^0,
        \end{equation}
        where $K$ depends on $\ell$, $\Hcal$ and $c_x$. We start assuming that $x\in\Vbf$ and $v(x,\Delta t)=v_0(x)+c_x\Delta t$, which trivially yields
        \begin{equation}\label{eq:SLstable3}
            |v(x,\Delta t)-v_0(x)|\le|c_x|\Delta t.
        \end{equation}
        Otherwise, we have by the definitions of $S$ and $\beta_0$ that there is a $\gamma\in\Ecal_x^+$, $s\in\Scal_\gamma$ and $\lambda\in[-\beta_0,\beta_0]$ such that $\gamma(s)=x$ and
        \begin{equation*}
            v(x,\Delta t)=I_\gamma[v_0\circ\gamma](s-\Delta t\lambda)+\Delta tL_\gamma(s,\lambda).
        \end{equation*}
        According to \cref{lipinterpol}, $I_\gamma[v_0\circ\gamma]$ is $\ell$--Lipschitz continuous as $v_0$, thus
        \begin{equation*}
            |v(x,\Delta t)-v_0(x)|\le\ell\lambda\Delta t+\Delta t|L_\gamma(s,\lambda)|\le\left(\ell\beta_0+\max_{s\in[0,|\gamma|],|\lambda|\le\beta_0}|L_\gamma(s,\lambda)|\right)\Delta t.
        \end{equation*}
        This and~\eqref{eq:SLstable3} show that~\eqref{eq:SLstable2} holds true with
        \begin{equation*}
            K\coloneqq\max\left\{\max_{x\in\Vbf}|c_x|,\max_{\gamma\in\Ecal^+}\left\{\ell\beta_0+\max_{s\in[0,|\gamma|],|\lambda|\le\beta_0}|L_\gamma(s,\lambda)|\right\}\right\}.
        \end{equation*}
    \end{proof}

    Next we prove that the scheme satisfies a consistency estimate.

    \begin{prop}\label{consisrates}
        Let $\Delta$ be an admissible pair with
        \begin{equation}\label{eq:consisrates.1}
            \Delta t\le\min_{\gamma\in\Ecal^+}\frac{\Delta_\gamma x}{\beta_0}.
        \end{equation}
        For any $\varphi\in C^2(\Gamma)$ the following estimates hold for~\eqref{eq:globteikdisc}:
        \begin{mynum}
            \item\label{en:consisrates1} if $\gamma\in\Ecal^+$ and $s\in\Scal_\gamma\setminus\{0,|\gamma|\}$, then
            \begin{equation*}
                \left|\frac{\varphi(\gamma(s))-S[\varphi](\gamma(s))}{\Delta t}-H_\gamma(s,D(\varphi\circ\gamma)(s))\right|\le C_0\max_{\gamma\in\Ecal}\left\|D^2(\varphi\circ\gamma)\right\|_\infty\min\left\{\Delta x,\frac{\Delta x^2}{\Delta t}\right\};
            \end{equation*}
            \item\label{en:consisrates2} if $x\in\Vbf$, then
            \begin{equation*}
                \left|\frac{\varphi(x)-S[\varphi](x)}{\Delta t}-F(x,\partial\varphi(x))\right|\le C_0\max_{\gamma\in\Ecal}\left\|D^2(\varphi\circ\gamma)\right\|_\infty\min\left\{\Delta x,\frac{\Delta x^2}{\Delta t}\right\},
            \end{equation*}
        \end{mynum}
        where $C_0$ is a positive constant independent of $\Delta$ and $\varphi$. In particular, the scheme~\eqref{eq:globteikdisc} satisfies a consistency estimate
        \begin{equation}\label{eq:consisrates.2}
            E(\Delta)\coloneqq C_0\min\left\{\Delta x,\frac{\Delta x^2}{\Delta t}\right\}.
        \end{equation}
    \end{prop}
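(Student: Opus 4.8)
The plan is to reduce both parts to a one-dimensional analysis of $S_\gamma$ applied to $\phi_\gamma\coloneqq\varphi\circ\gamma\in C^2([0,|\gamma|])$. The pivotal structural remark is that $L_\gamma(s,\cdot)\equiv+\infty$ outside $[-\beta_0,\beta_0]$, so only $|\lambda|\le\beta_0$ enter the minimum in~\eqref{eq:S_gamma}; moreover the CFL-type restriction~\eqref{eq:consisrates.1} guarantees $[-\beta_0,\beta_0]\subseteq\big[\tfrac{s-|\gamma|}{\Delta t},\tfrac s{\Delta t}\big]$ for an interior node (since then $s\ge\Delta_\gamma x$ and $|\gamma|-s\ge\Delta_\gamma x$), so the minimization is never clipped by the admissibility constraint. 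A second consequence is that any competitor $y=s-\Delta t\lambda$ obeys $|y-s|\le\Delta t\beta_0\le\Delta_\gamma x$, hence $y$ lies in a mesh cell adjacent to the node $s$; this is what lets the piecewise-linear interpolant be compared with $\phi_\gamma$ to second order.

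First I would isolate the two error contributions. The interpolation remainder on the adjacent cell satisfies $|I_\gamma[\phi_\gamma](y)-\phi_\gamma(y)|\le\tfrac12\|\phi_\gamma''\|_\infty|y-s|\,\Delta_\gamma x$; dividing by $\Delta t$ and using $|y-s|\le\Delta t\beta_0$ bounds it by $C\|\phi_\gamma''\|_\infty\Delta x$, whereas the crude midpoint bound $\tfrac18\|\phi_\gamma''\|_\infty\Delta x^2$ bounds it by $C\|\phi_\gamma''\|_\infty\Delta x^2/\Delta t$; retaining the smaller gives the factor $\min\{\Delta x,\Delta x^2/\Delta t\}$. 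Likewise, the Taylor expansion $\phi_\gamma(s-\Delta t\lambda)=\phi_\gamma(s)-\Delta t\lambda\,\phi_\gamma'(s)+R$ with $|R|\le\tfrac12\|\phi_\gamma''\|_\infty\Delta t^2\beta_0^2$ contributes, after division by $\Delta t$, the term $\tfrac12\|\phi_\gamma''\|_\infty\Delta t\beta_0^2$, which by $\Delta t\beta_0\le\Delta x$ is $\le C\|\phi_\gamma''\|_\infty\Delta x$ and by $(\Delta t\beta_0)^2\le\Delta x^2$ is $\le C\|\phi_\gamma''\|_\infty\Delta x^2/\Delta t$. Both errors are therefore controlled by $C\max_{\gamma}\|D^2(\varphi\circ\gamma)\|_\infty\min\{\Delta x,\Delta x^2/\Delta t\}$.

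Part (i) then follows by replacing $I_\gamma[\phi_\gamma]$ with $\phi_\gamma$, Taylor-expanding, and factoring out $\phi_\gamma(s)$, which yields $S_\gamma[\phi_\gamma](s)=\phi_\gamma(s)+\Delta t\min_{|\lambda|\le\beta_0}\{-\lambda\phi_\gamma'(s)+L_\gamma(s,\lambda)\}$ up to the two controlled errors; since the inner minimum equals $-H_\gamma(s,\phi_\gamma'(s))$ by the conjugacy identity $H_\gamma(s,\mu)=\max_{|\lambda|\le\beta_0}(\lambda\mu-L_\gamma(s,\lambda))$ from \cref{modHprop}, the desired bound drops out.

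Part (ii) is the delicate step. Writing $S[\varphi](x)=\min\{\wtS[\varphi](x),\varphi(x)+c_x\Delta t\}$ and using $\tfrac{\varphi(x)-\min\{a,b\}}{\Delta t}=\max\{\tfrac{\varphi(x)-a}{\Delta t},\tfrac{\varphi(x)-b}{\Delta t}\}$, the quotient becomes $\max\{-c_x,\ \max_{\gamma\in\Ecal_x^+}\tfrac{\varphi(x)-S_\gamma[\varphi](\gamma^{-1}(x))}{\Delta t}\}$, to be compared with $F(x,\partial\varphi(x))=\max\{-c_x,\ \max_{\{\gamma:\gamma(0)=x\}}H^-_\gamma(0,D(\varphi\circ\gamma)(0))\}$. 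The key is to identify each inner quotient with the matching $H^-$-value. Repeating the interior analysis at the boundary node forces a one-sided range: for an arc with $\gamma(0)=x$ one finds $\max_{-\beta_0\le\lambda\le0}\{\lambda\mu-L_\gamma(0,\lambda)\}$, and I expect the decisive lemma to be that this constrained Legendre transform equals exactly $H^-_\gamma(0,\mu)$ — it coincides with $H_\gamma(0,\mu)$ for $\mu\le\whmu_\gamma$, where the unconstrained optimizer is already $\le0$, and it freezes at $H_\gamma(0,\whmu_\gamma)$ for $\mu>\whmu_\gamma$, where the constraint activates at $\lambda=0$ — which is precisely the definition~\eqref{eq:minHdef}. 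For an arc with $\gamma(|\gamma|)=x$ (so $\gamma^{-1}(x)=|\gamma|$) the admissible range is $[0,\beta_0]$, and I would pass to the inverse arc $\wtgamma$, using $D(\varphi\circ\wtgamma)(0)=-D(\varphi\circ\gamma)(|\gamma|)$ together with the compatibility relations $H_{\wtgamma}(0,\mu)=H_\gamma(|\gamma|,-\mu)$ and $L_{\wtgamma}(0,\nu)=L_\gamma(|\gamma|,-\nu)$ to recast the constrained maximum as $H^-_{\wtgamma}(0,D(\varphi\circ\wtgamma)(0))$ — exactly the term that $\wtgamma$, an arc starting at $x$, contributes to $F$. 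Thus as $\gamma$ ranges over $\Ecal_x^+$ the quotients reproduce every term of $F$; since each differs from its target by at most the common $\min\{\Delta x,\Delta x^2/\Delta t\}$ error and finite maxima are non-expansive, the two outer maxima differ by no more than that error, giving part (ii). Combining the two parts yields the consistency estimate~\eqref{eq:consisrates.2}. The main obstacle is this vertex bookkeeping: the one-sided Legendre identification with $H^-_\gamma$ and the reduction of incoming arcs to their inverses via the compatibility relations.
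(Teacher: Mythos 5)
Your proof is correct and takes essentially the same route as the paper's: the same splitting into an interpolation error and a Taylor error, both controlled by $\min\{\Delta x\Delta t,\Delta x^2\}$ before dividing by $\Delta t$, the same use of~\eqref{eq:consisrates.1} to guarantee that the minimization interval in~\eqref{eq:S_gamma} contains $[-\beta_0,\beta_0]$ at interior nodes (resp.\ $[-\beta_0,0]$ at vertices), and the same identification of the one-sided Legendre transform with $H^-_\gamma$ so that the vertex quotient reproduces $F(x,\partial\varphi(x))$. The only differences are details you supply that the paper outsources: it cites~\cite{CharlesDespresMehrenberger13,FalconeFerretti13} for the interpolation bound, asserts the identity $\max_{-\beta_0\le\lambda\le0}\{\lambda\mu-L_\gamma(0,\lambda)\}=H^-_\gamma(0,\mu)$ without proof, and dispatches the incoming-arc bookkeeping by assuming without loss of generality that $\gamma^{-1}(x)=0$ for all $\gamma\in\Ecal^+_x$, where you instead invoke the explicit compatibility relations for $\wtgamma$.
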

    \begin{proof}
        First, we get from the error of the piecewise linear interpolation given in~\cite{CharlesDespresMehrenberger13,FalconeFerretti13}, for any $\gamma\in\Ecal^+$ and $s\in[0,|\gamma|]$,
        \begin{equation}\label{eq:consisrates1}
            \max_{\frac{s-|\gamma|}{\Delta t}\le\lambda\le\frac s{\Delta t}}|I_\gamma[\varphi\circ\gamma](s-\Delta t\lambda)-\varphi(\gamma(s-\Delta t\lambda))|\le A\left\|D^2(\varphi\circ\gamma)\right\|_\infty\min\left\{\Delta x\Delta t,\Delta x^2\right\},
        \end{equation}
        where $A$ is a positive constant independent of $\Delta$ and $\varphi$. Moreover, for every $\gamma\in\Ecal^+$ and $s\in\Scal_\gamma$, we obtain through~\eqref{eq:consisrates1} and a first order Taylor expansion
        \begin{multline}\label{eq:consisrates2}
            \varphi(\gamma(s))-S[\varphi](\gamma(s))\\
            \begin{aligned}
                =\,&\varphi(\gamma(s))-S_\gamma[\varphi\circ\gamma](s)\\
                =\,&\varphi(\gamma(s))-\min_{\frac{s-|\gamma|}{\Delta t}\le\lambda\le\frac s{\Delta t}}\{I_\gamma[\varphi\circ\gamma](s-\Delta t\lambda)+\Delta tL_\gamma(s,\lambda)\}\\
                \le\,&\max_{\frac{s-|\gamma|}{\Delta t}\le\lambda\le\frac s{\Delta t}}\{\Delta t\lambda D(\varphi\circ\gamma)(s)-\Delta tL_\gamma(s,\lambda)\}+C_0\left\|D^2(\varphi\circ\gamma)\right\|_\infty\min\left\{\Delta x\Delta t,\Delta x^2\right\}\\
                =\,&\max_{\frac{s-|\gamma|}{\Delta t}\le\lambda\le\frac s{\Delta t}}\{\Delta t\lambda D(\varphi\circ\gamma)(s)-\Delta tL_\gamma(s,\lambda)\}+\Delta t\left\|D^2(\varphi\circ\gamma)\right\|_\infty E(\Delta),
            \end{aligned}
        \end{multline}
        where $C_0$ is a positive constant independent of $\Delta$ and $\varphi$, and $E$ is defined in~\eqref{eq:consisrates.2}. If $s\in\Scal_\gamma\setminus\{0,|\gamma|\}$ we have by~\eqref{eq:consisrates.1} that $[-\beta_0,\beta_0]\subseteq\left[\dfrac{s-|\gamma|}{\Delta t},\dfrac s{\Delta t}\right]$, therefore
        \begin{align*}
            \varphi(\gamma(s))-S[\varphi](\gamma(s))\le\,&\Delta t\max_{|\lambda|\le\beta_0}\{\lambda D(\varphi\circ\gamma)(s)-L_\gamma(s,\lambda)\}+\Delta t\left\|D^2(\varphi\circ\gamma)\right\|_\infty E(\Delta)\\
            =\,&\Delta tH_\gamma(s,D(\varphi\circ\gamma))+\Delta t\left\|D^2(\varphi\circ\gamma)\right\|_\infty E(\Delta),
        \end{align*}
        where the last identity is due to the fact that $L(s,\lambda)=\infty$ when $\lambda>\beta_0$. In the same way
        \begin{equation*}
            \varphi(\gamma(s))-S[\varphi](\gamma(s))\ge\Delta tH_\gamma(s,D(\varphi\circ\gamma))-\Delta t\left\|D^2(\varphi\circ\gamma)\right\|_\infty E(\Delta).
        \end{equation*}
        These two inequalities prove~\ref{en:consisrates1}.\\
        Now let $x\in\Vbf$ and assume, without loss of generality, that $\gamma^{-1}(x)=0$ for all $\gamma\in\Ecal^+_x$. We have by definition
        \begin{equation*}
            \varphi(x)-S[\varphi](x)=\max\left\{-c_x\Delta t,\max_{\gamma\in\Ecal^+_x}\{\varphi(\gamma(0))-S_\gamma[\varphi\circ\gamma](0)\}\right\}.
        \end{equation*}
        Notice that~\eqref{eq:consisrates.1} yields
        \begin{equation}\label{weakcondition}
            \beta_0\le\dfrac{|\gamma|}{\Delta t},\qquad \text{for every }\gamma\in\Ecal^+,
        \end{equation}
        which in turn implies the inclusion $\left[-\dfrac{|\gamma|}{\Delta t},0\right]\subseteq[ -\beta_0,0]$. We therefore obtain from \eqref{eq:consisrates2}, for any $\gamma\in\Ecal_x^+$,
        \begin{align*}
            \varphi(\gamma(0))-S_\gamma[\varphi\circ\gamma](0)\le\,&\max_{-\frac{|\gamma|}{\Delta t}\le\lambda\le0}\{\Delta t\lambda D(\varphi\circ\gamma)(0)-\Delta tL_\gamma(0,\lambda)\}+\Delta t\left\|D^2(\varphi\circ\gamma)\right\|_\infty E(\Delta)\\
            =\,&\Delta t\max_{-\beta_0\le\lambda\le0}\{\lambda D(\varphi\circ\gamma)(0)-L_\gamma(0,\lambda)\}+\Delta t\left\|D^2(\varphi\circ\gamma)\right\|_\infty E(\Delta)\\
            =\,&\Delta tH_\gamma^-(0,D(\varphi\circ\gamma))+\Delta t\left\|D^2(\varphi\circ\gamma)\right\|_\infty E(\Delta),
        \end{align*}
        and
        \begin{equation}\label{eq:consisrates3}
            \begin{aligned}
                \varphi(x)-S[\varphi](x)\le\;&\Delta t\max\left\{-c_x,\max_{\gamma\in\Ecal^+_x}H^-_\gamma(0,p_\gamma\dot\gamma(0))\right\}+\Delta t\max_{\gamma\in\Ecal}\left\|D^2(\varphi\circ\gamma)\right\|_\infty E(\Delta)\\
                =\;&\Delta tF(x,\partial\varphi)+\Delta t\max_{\gamma\in\Ecal}\left\|D^2(\varphi\circ\gamma)\right\|_\infty E(\Delta).
            \end{aligned}
        \end{equation}
        Similarly we get
        \begin{equation*}
            \varphi(x)-S[\varphi](x)\ge\Delta tF(x,\partial\varphi)-\Delta t\max_{\gamma\in\Ecal}\left\|D^2(\varphi\circ\gamma)\right\|_\infty E(\Delta),
        \end{equation*}
        which together with~\eqref{eq:consisrates3} proves~\ref{en:consisrates2}.
    \end{proof}

    \begin{rem}
        Let us notice that the proof of \cref{consisrates}\ref{en:consisrates2} requires the inclusion
        \begin{equation*}
            \left[-\frac{|\gamma|}{\Delta t},0\right]\subseteq[ -\beta_0,0],\qquad\text{for every }\gamma\in\Ecal^+,
        \end{equation*}
        or, equivalently, \eqref{weakcondition}. This is verified by any $\Delta t $ small enough independently of $\Delta x$. In particular, the stronger condition~\eqref{eq:consisrates.1} is not necessary for this case.
    \end{rem}

    The \namecref{errCSaux} below is a consequence of \cref{SLerr,SLstable,consisrates}.

    \begin{theo}\label{errCSaux}
        Fixed a flux limiter $c_x$, $T>0$ and an admissible pair $\Delta$ satisfying~\eqref{eq:consisrates.1}, let $u$ and $v$ be a solution to~\eqref{eq:globteikmod} and~\eqref{eq:globteikdisc}, respectively, with Lipschitz continuous initial datum $u_0$ and flux limiter $c_x$. Then there exists a constant $C>0$, independent of $\Delta$, such that
        \begin{equation}\label{eq:errCSaux.1}
            |v(x,t)-u(x,t)|\le CT\left(\frac1{\sqrt{\Delta t}}\min\left\{\Delta x,\frac{\Delta x^2}{\Delta t}\right\}+\sqrt{\Delta t}\right),\qquad\text{for any }(x,t)\in\Gamma^T.
        \end{equation}
    \end{theo}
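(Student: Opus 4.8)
The plan is to read \eqref{eq:globteikdisc} as an instance of the abstract scheme \eqref{eq:discsol} and then invoke the abstract error estimate \cref{SLerr} verbatim. Concretely, I would set $\ovS=S$, take the discrete solution to be $w=v$, and the discrete initial datum to be $w_0=u_0$; note that in the statement both the continuous solution $u$ (of the modified problem \eqref{eq:globteikmod}) and the discrete solution $v$ share the \emph{same} Lipschitz initial datum $u_0$, which is exactly the configuration for which \cref{SLerr} is phrased. Since \cref{SLerr} already incorporates, via \cref{defequiv}, the equivalence between the solution notion of \cite{CarliniFestaForcadel20} (\cref{IMdef}) and the one used by the scheme (\cref{deftsubsol}), no further reconciliation of solution concepts is needed at this point.

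The core of the proof is a verification that the four structural hypotheses of \cref{SLerr} hold for the operator $S$. Monotonicity, invariance by addition of constants, and stability are precisely the content of \cref{SLstable}. The consistency estimate is furnished by \cref{consisrates}, giving $E(\Delta)=C_0\min\{\Delta x,\Delta x^2/\Delta t\}$ as in \eqref{eq:consisrates.2}; this is the step that uses the admissibility restriction \eqref{eq:consisrates.1}, which is assumed in the hypotheses of \cref{errCSaux}. Finally, the standing requirements on the Hamiltonian demanded by \cref{SLerr}—namely \ref{condconv}, \labelcref{condcoerc}, and the uniform Lipschitz continuity of each $H_\gamma$—are guaranteed by \cref{modHprop}, while \cref{exunsolt} supplies the Lipschitz regularity of $u$.

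Granting these, I would apply \cref{SLerr} directly. Because $w_0=u_0$, the mismatch term $\max_{x\in\Gamma^0}|w_0(x)-u_0(x)|$ in \eqref{eq:SLerr.1} vanishes, so the estimate collapses to
\begin{equation*}
    \max_{(x,t)\in\Gamma^T}|v(x,t)-u(x,t)|\le CT\left(\frac{E(\Delta)}{\sqrt{\Delta t}}+\sqrt{\Delta t}\right).
\end{equation*}
Substituting $E(\Delta)=C_0\min\{\Delta x,\Delta x^2/\Delta t\}$ and absorbing $C_0$ into the constant $C$ yields exactly \eqref{eq:errCSaux.1}, for every $(x,t)\in\Gamma^T$.

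I do not expect a genuine computational obstacle here: the estimate is an assembly of results already established. The only point requiring care is one of applicability rather than calculation—ensuring that every hypothesis of the imported \cref{SLerr} is literally met, in particular that the consistency rate \eqref{eq:consisrates.2} is available under \eqref{eq:consisrates.1} and that the equivalence of the two solution notions (\cref{defequiv}) legitimately lets us compare the discrete solution $v$ with the solution $u$ of \eqref{eq:globteikmod}. The substantive work has in effect been discharged in \cref{SLstable,consisrates,defequiv}, so the present statement follows with no additional difficulty.
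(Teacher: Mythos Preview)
Your proposal is correct and mirrors the paper's own argument exactly: the paper states that \cref{errCSaux} is a direct consequence of \cref{SLerr,SLstable,consisrates}, and your write-up simply spells out this assembly (including the observation that $w_0=u_0$ kills the initial-datum term in \eqref{eq:SLerr.1}). There is nothing to add or correct.
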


    \begin{rem}\label{Cindg}
        \Cref{CdepT} holds true for $C$ in \cref{errCSaux}. Following \cref{SLstable,exunsolt} we further have that $K$ in~\eqref{eq:SLstable.1}, and consequently $C$, can be taken independent of the initial datum $u_0$, using instead the Lipschitz constant of the solution $u$. In particular, \eqref{eq:errCSaux.1} holds true with the same constant $C$ for any collection of equi--Lipschitz continuous solutions to~\eqref{eq:globteikmod} with flux limiter $c_x$ and solutions to~\eqref{eq:globteikdisc} approximating them.
    \end{rem}

    Sometimes is useful to know the behavior of solutions to evolutive problems as $t\to\infty$, see for instance~\cite{Pozza23,AchdouCamilliCapuzzo-Dolcetta08}. Accordingly, it is convenient to have an error estimate as in~\eqref{eq:errCSaux.1} where $C$ is independent of $T$. However, as observed in \cref{CdepT}, it is unclear if the constant $C$ depends on $T$. We thus make an effort to prove a similar estimate for which the dependency on $T$ is explicit.

    \begin{cor}\label{errCSauxind}
        Fixed a flux limiter $c_x$, $T>0$ and an admissible pair $\Delta$ satisfying~\eqref{eq:consisrates.1}, let $u$ and $v$ be a solution to~\eqref{eq:globteikmod} and~\eqref{eq:globteikdisc}, respectively, with Lipschitz continuous initial datum $u_0$ and flux limiter $c_x$. Then there exists a constant $C>0$, independent of $\Delta$ and $T$, such that
        \begin{equation*}
            |v(x,t)-u(x,t)|\le CT\left(\frac1{\sqrt{\Delta t}}\min\left\{\Delta x,\frac{\Delta x^2}{\Delta t}\right\}+\sqrt{\Delta t}\right),\qquad\text{for any }(x,t)\in\Gamma^T.
        \end{equation*}
    \end{cor}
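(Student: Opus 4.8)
The plan is to upgrade \cref{errCSaux} to a $T$-uniform bound by a restarting (semigroup) argument, arranged so that the per-step constant is controlled \emph{solely} by the time-uniform Lipschitz constant of the continuous solution. Write $\rho(\Delta)\coloneqq\tfrac1{\sqrt{\Delta t}}\min\{\Delta x,\Delta x^2/\Delta t\}+\sqrt{\Delta t}$ for the factor in \eqref{eq:errCSaux.1}, fix once and for all a reference horizon $\ovT=1$, and let $L_u$ be the Lipschitz constant of $u$ on $\Gamma\times\Rds^+$ furnished by \cref{exunsolt} (which is uniform in time). Two regimes are immediate. If $T\le\ovT$ the claim is exactly \cref{errCSaux} via \cref{CdepT}. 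If $\Delta t>\ovT$ then $\rho(\Delta)\ge\sqrt{\Delta t}>1$, and since $|v(x,t)-u(x,t)|\le|v(x,t)-u_0(x)|+|u_0(x)-u(x,t)|\le(K+L_u)t$ by the stability bound \eqref{eq:SLstable.1} and the time-Lipschitz regularity of $u$, the estimate holds with $C\ge K+L_u$. So I may assume $\Delta t\le\ovT<T$ and split the time grid $\Tcal^T_\Delta$ into $n$ consecutive blocks with grid endpoints $0=\tau_0<\tau_1<\dots<\tau_n=T$, each consisting of $\lfloor\ovT/\Delta t\rfloor$ steps (the last one possibly shorter), so that every block has length in $(0,\ovT]$ and $n\le T/\ovT+1=O(T)$.

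On each block I would compare $v$ with the discrete evolution started from the trace of the \emph{true} continuous solution. For $k=0,\dots,n-1$ let $\tilde v_k$ solve \eqref{eq:globteikdisc} on the $k$-th block with initial datum $u(\cdot,\tau_k)\big|_{\Gamma^0}$, and recall that, by the uniqueness in \cref{exunsolt}, restarting the continuous problem from $u(\cdot,\tau_k)$ reproduces $u$ itself. Setting $b_k\coloneqq\max_{\Gamma^0}|v(\cdot,\tau_k)-u(\cdot,\tau_k)|$, the non-expansiveness \eqref{eq:SLstable1} of the scheme (iterated over the steps of the block) gives $\max|v-\tilde v_k|\le b_k$ throughout the block, since $v$ and $\tilde v_k$ are two discrete solutions whose data differ by $b_k$ at $\tau_k$. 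On the other hand $\tilde v_k$ and $u$ share the same Lipschitz datum $u(\cdot,\tau_k)$, so \cref{errCSaux} applied on this block—of length $\le\ovT$—yields $\max|\tilde v_k-u|\le C\,\ovT\,\rho(\Delta)$; here \cref{CdepT} fixes the horizon $\ovT$ and, decisively, \cref{Cindg} makes $C$ one and the same for all $k$, because the relevant continuous solutions are the time-shifts $u(\cdot,\tau_k+\cdot)$, an equi-Lipschitz family with constant $L_u$. The triangle inequality then gives $\max_{\text{block }k}|v-u|\le b_k+C\ovT\rho(\Delta)$ and, at $\tau_{k+1}$, the recursion $b_{k+1}\le b_k+C\ovT\rho(\Delta)$. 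With $b_0=0$ this yields $b_k\le kC\ovT\rho(\Delta)$, whence $\max_{\Gamma^T}|v-u|\le(n+1)C\ovT\rho(\Delta)\le C' T\rho(\Delta)$ with $C'$ independent of $T$ and $\Delta$, which is the assertion.

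The only point requiring care—and the reason for restarting $\tilde v_k$ from $u$'s trace rather than from $v(\cdot,\tau_k)$—is to keep the per-block constant genuinely independent of $T$. Were one instead to restart the continuous problem from (an interpolation of) the discrete slice $v(\cdot,\tau_k)$, the constant delivered by \cref{SLstable} and \cref{Cindg} would depend on $\operatorname{Lip}(v(\cdot,\tau_k))$, for which the only obvious bound, $\operatorname{Lip}(v(\cdot,t_m))\le\operatorname{Lip}(v_0)+t_m\max_\gamma\operatorname{Lip}_s L_\gamma$, grows linearly in time and would degrade the final estimate to order $T^2$. The chosen comparison sidesteps this entirely: the discrete iterates enter only through the sup-norm non-expansiveness \eqref{eq:SLstable1}, while all Lipschitz information is imported from $u$, whose Lipschitz constant is uniform in time by \cref{exunsolt}. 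What remains is routine bookkeeping: choosing the $\tau_k$ on the grid with $n=O(T)$ blocks, and checking the legitimacy of the comparison step, i.e.\ that restarting the continuous problem from $u(\cdot,\tau_k)$ reproduces $u$ (uniqueness, \cref{exunsolt}).
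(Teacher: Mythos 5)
Your proposal is correct and follows essentially the same route as the paper's proof: a recursion (the paper phrases it as induction) over time blocks of length at most one, comparing $v$ with the discrete evolution restarted from the trace of the continuous solution $u$, bounding the drift term via the non-expansiveness \eqref{eq:SLstable1} and the per-block error via \cref{errCSaux}, with \cref{Cindg} supplying a block-independent constant through the equi-Lipschitz family of time-shifts of $u$. Your additional bookkeeping (grid-aligned block endpoints and the degenerate case $\Delta t>\ovT$) only makes explicit minor details that the paper leaves implicit.
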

    \begin{proof}
        We proceed by induction and preliminarily fix $T_0\le1$. We then have by \cref{errCSaux} that
        \begin{equation*}
            |v(x,t)-u(x,t)|\le CT_0\left(\frac1{\sqrt{\Delta t}}\min\left\{\Delta x,\frac{\Delta x^2}{\Delta t}\right\}+\sqrt{\Delta t}\right),\qquad\text{for any }(x,t)\in\Gamma^{T_0},
        \end{equation*}
        where $C$ is a constant as in \cref{CdepT,Cindg}. In particular, $C$ can be assumed independent of $T$.\\
        Next we assume that $T=(n+1)T_0$ and that our claim holds true for $nT_0$, namely
        \begin{equation}\label{eq:errCSauxind1}
            |v(x,t)-u(x,t)|\le CnT_0\left(\frac1{\sqrt{\Delta t}}\min\left\{\Delta x,\frac{\Delta x^2}{\Delta t}\right\}+\sqrt{\Delta t}\right),\qquad\text{for any }(x,t)\in\Gamma^{nT_0}.
        \end{equation}
        For any fixed $t\in\Tcal^T$ with $t>nT_0$,
        \begin{equation}\label{eq:errCSauxind2}
            \max_{x\in\Gamma^0}|v(x,t)-u(x,t)|\le\max_{x\in\Gamma^0}\left|v(x,t)-v'(x,t-nT_0)\right|+\max_{x\in\Gamma^0}\left|v'(x,t-nT_0)-u(x,t)\right|,
        \end{equation}
        where $v'$ is the solution to~\eqref{eq:globteikdisc} on $\Gamma^{T_0}$ with initial datum $u(x,nT_0)$. Exploiting~\eqref{eq:SLstable1} we get by~\eqref{eq:errCSauxind1}
        \begin{equation}\label{eq:errCSauxind3}
            \begin{aligned}
                \max_{x\in\Gamma^0}\left|v(x,t)-v'(x,t-nT_0)\right|\le\;&\max_{x\in\Gamma^0}|v(x,nT_0)-u(x,nT_0)|\\
                \le\;&CnT_0\left(\frac1{\sqrt{\Delta t}}\min\left\{\Delta x,\frac{\Delta x^2}{\Delta t}\right\}+\sqrt{\Delta t}\right),
            \end{aligned}
        \end{equation}
        while \cref{Cindg} yields
        \begin{equation*}
            \max_{x\in\Gamma^0}\left|v'(x,t-nT_0)-u(x,t)\right|\le CT_0\left(\frac1{\sqrt{\Delta t}}\min\left\{\Delta x,\frac{\Delta x^2}{\Delta t}\right\}+\sqrt{\Delta t}\right),
        \end{equation*}
        where $C$ is the same constant independent of $T$ and $\Delta$ in~\eqref{eq:errCSauxind3}. This and~\eqref{eq:errCSauxind2} conclude our proof since $t$ is arbitrary.
    \end{proof}

    We stress out that, if $u_0$ is Lipschitz continuous and the modified Hamiltonian $\Hcal$ is chosen accordingly, the solutions to~\eqref{eq:globteikmod} and~\eqref{eq:globteik} with initial datum $u_0$ and flux limiter $c_x$ coincide by \cref{solmod}, thus \cref{errCSauxind} yields the next result.

    \begin{cor}\label{errCS}
        Fixed a flux limiter $c_x$, a Lipschitz continuous initial datum $u_0$, $T>0$ and an admissible pair $\Delta$ satisfying~\eqref{eq:consisrates.1}, let $u$ and $v$ be a solution to~\eqref{eq:globteik} and~\eqref{eq:globteikdisc}, respectively, with initial datum $u_0$ and flux limiter $c_x$. There exists a constant $C>0$, independent of $\Delta$ and $T$, such that
        \begin{equation*}
            |v(x,t)-u(x,t)|\le CT\left(\frac1{\sqrt{\Delta t}}\min\left\{\Delta x,\frac{\Delta x^2}{\Delta t}\right\}+\sqrt{\Delta t}\right),\qquad\text{for any }(x,t)\in\Gamma^T.
        \end{equation*}
    \end{cor}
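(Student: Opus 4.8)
The plan is to deduce this statement directly from \cref{errCSauxind} by identifying the solution of~\eqref{eq:globteik} with that of the modified problem~\eqref{eq:globteikmod}. The key observation is that the error estimate has already been established for the modified Hamiltonian in \cref{errCSauxind}, so the only remaining work is to transfer it back to the original problem.

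First I would invoke \cref{solmod}. Since the initial datum $u_0$ is Lipschitz continuous and $c_x$ is a fixed flux limiter, there is a choice of the compact momentum interval $I$---depending only on $c_x$ and $u_0$---for which the solution $u$ to~\eqref{eq:globteik} with initial datum $u_0$ and flux limiter $c_x$ coincides with the solution to the modified problem~\eqref{eq:globteikmod} with the same data. I would fix such an $I$, together with the associated modified Hamiltonian $\Hcal$, and henceforth regard $u$ as a solution to~\eqref{eq:globteikmod}.

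With this identification in place, $u$ and $v$ are respectively a solution to~\eqref{eq:globteikmod} and to the discrete scheme~\eqref{eq:globteikdisc}, both with the Lipschitz continuous initial datum $u_0$ and flux limiter $c_x$, and the admissible pair $\Delta$ satisfies~\eqref{eq:consisrates.1}. These are exactly the hypotheses of \cref{errCSauxind}, which I would then apply verbatim to obtain the stated estimate with a constant $C>0$ independent of $\Delta$ and $T$.

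There is no real obstacle here: the argument is a direct transfer through the equivalence of the two problems. The only point worth flagging is that fixing $I$ fixes the modified Hamiltonian $\Hcal$, and hence the constant $C$ produced by \cref{errCSauxind} may depend on $I$ (and thus on $u_0$ and $c_x$); but this is harmless, since the claim only asserts independence of $C$ from $\Delta$ and $T$. That the constant can indeed be taken independent of $T$---and of the particular initial datum, via the Lipschitz constant of $u$---is precisely what \cref{Cindg} guarantees, so the conclusion carries over once $I$ is fixed.
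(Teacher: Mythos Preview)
Your proposal is correct and follows exactly the paper's approach: invoke \cref{solmod} to identify the solution of~\eqref{eq:globteik} with that of~\eqref{eq:globteikmod} for a suitable choice of $I$, and then apply \cref{errCSauxind} verbatim. The paper's proof is nothing more than the sentence preceding the corollary, and your additional remarks about the dependence of $C$ on $I$ (hence on $u_0$ and $c_x$) are accurate and consistent with what the statement claims.
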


    \begin{rem}
        Let $\{\Delta_n\}_{n\in\Nds}$ be an infinitesimal sequence of admissible pairs, and denote by $v_n$ the solution to~\eqref{eq:globteikdisc} with $\Delta_n$ instead of $\Delta$. \Cref{errCS} shows that, if these pairs satisfy~\eqref{eq:consisrates.1} and
        \begin{equation*}
            \lim_{n\to\infty}\min\left\{\frac{\Delta_n x}{\sqrt{\Delta_n t}},\frac{\Delta_n x^2}{\Delta_n t^{\frac32}}\right\}=0,
        \end{equation*}
        then $v_n$ converges to the solution $u$ to~\eqref{eq:globteik}. Furthermore, by choosing $\Delta t=O(\Delta x)$, we obtain a convergence estimate of order one-half.\\
        Without assuming~\eqref{eq:consisrates.1} convergence still holds (as proved in \cite[Theorem 9.1]{CarliniSiconolfi23}), although no error estimate is provided, if $\Delta_n x\le\Delta_n t$ for each $n$ big enough and
        \begin{equation}\label{def:ICFL}
            \lim_{n\to\infty}\frac{\Delta_n x}{\Delta_n t}=0.
        \end{equation}
        This kind of result is not surprising: a similar conclusion is found in~\cite{CarliniFestaForcadel20}, where a convergence estimate of order one-half is proven under a hyperbolic CFL condition, while, without this assumption, a convergence result is still achieved but without an associated error estimate. The numerical results, shown in the next \lcnamecref{sec:numerics}, confirm that condition~\eqref{eq:consisrates.1} is sufficient but not necessary for convergence.
    \end{rem}

    \section{Impact of Time Step Restriction Violations}\label{sec:numerics}

    We examine the time step restriction~\eqref{eq:consisrates.1}, required for our error estimate, in two numerical simulations, adapted from \cite[Section 9.1]{CarliniSiconolfi23}. In the first test the Hamiltonians are independent of $s$, while in the second one the Hamiltonians depend on $s$.

    We start by observing that the constant $\beta_0$ can be chosen such that $\beta_0 > \beta_{\gamma}$ for any $\gamma$, where $\beta_{\gamma}$ is the Lipschitz constant of $\widetilde H_{\gamma}$ over a given compact set. For a more detailed discussion on the definition of $\beta_0$, refer to \cite[Appendix A]{CarliniSiconolfi23}. In the simple case where the Hamiltonians do not depend on the state variable, \eqref{eq:consisrates.1} coincides with the CFL condition given in \cite[Section 1.3]{GuerandKoumaiha19} and \cite[Section 1.2]{CostesequeLebacqueMonneau14}. It is important to note that the CFL condition in~\cite{GuerandKoumaiha19,CostesequeLebacqueMonneau14} is necessary to guarantee monotonicity. In our case, however, the condition plays a different role. Specifically, it is needed to prove the consistency error estimate in \cref{consisrates}\ref{en:consisrates1}. As typical in semi-Lagrangian schemes, the scheme is monotone by construction without any condition on the parameters. If condition~\eqref{eq:consisrates.1} is violated, the scheme~\eqref{eq:globteikdisc} remains monotone but may lose accuracy, as demonstrated in the first of the two following examples. We measure the scheme performance by computing the following errors at the final time $T$:
    \begin{equation}\label{err}
        E^\infty \coloneqq\max_{x \in \Gamma^0} |u(x,T) - v(x,T)|, \qquad E^1 \coloneqq\sum_{x \in \Gamma^0} |u(x,T) - v(x,T)| \Delta x,
    \end{equation}
    where $v$ is the approximated solution defined by the scheme~\eqref{eq:globteikdisc} and $u$ indicates the exact solution of problem~\eqref{eq:globteikmod}, when it is known, or a reference solution, when the exact solution is not available.

    Our numerical tests are performed on the following network: let $\Gamma \subset\Rds^2$ be chosen as a triangle with vertices
    \begin{equation*}
        z_1 \coloneqq(0,0), \qquad z_2 \coloneqq(0,1), \qquad z_3 \coloneqq\left(\frac{1}{2}, \frac{1}{2}\right),
    \end{equation*}
    with arcs
    \begin{equation*}
        \gamma_1(s) \coloneqq s \, z_2, \qquad \gamma_2(s) \coloneqq s \, z_3, \qquad \gamma_3(s) \coloneqq(1-s) \, z_2 + s \, z_3,
    \end{equation*}
    and the corresponding reversed arcs, as shown in \cref{fig:Network}.

    \begin{figure}[hbt!]
        \centering
        \begin{tikzpicture}[scale=3]
            \draw[->] (-0.5,0) -- (1.5,0) node[right] {$x_1$};
            \draw[->] (0,-0.5) -- (0,1.5) node[above] {$x_2$};

            \node[below left] at (0,0) {$z_1$};
            \node[above left] at (0,1) {$z_2$};
            \node[right ] at (0.5,0.5) {$z_3$};

            \draw[thick, postaction={decorate, decoration={markings, mark=at position 0.5 with {\arrow{>}}}}] (0,0) -- (0,1)
            node[midway, left=3pt] {$\gamma_1$};

            \draw[thick, postaction={decorate, decoration={markings, mark=at position 0.5 with {\arrow{>}}}}] (0,0) -- (0.5,0.5)
            node[midway, right=3pt] {$\gamma_2$};

            \draw[thick, postaction={decorate, decoration={markings, mark=at position 0.5 with {\arrow{>}}}}] (0,1) -- (0.5,0.5)
            node[midway, above=3pt] {$\gamma_3$};

            \filldraw[black] (0,0) circle (0.5pt);
            \filldraw[black] (0,1) circle (0.5pt);
            \filldraw[black] (0.5,0.5) circle (0.5pt);
        \end{tikzpicture}
        \caption{Network $\Gamma \subset \Rds^2 $}\label{fig:Network}
    \end{figure}

    \subsection{Test with Hamiltonians Independent of \texorpdfstring{$s$}{s}}\label{ssec:test1}

    The cost functions are chosen as $ L_{\gamma_i}(\lambda) \coloneqq\frac{\lambda^2}{2} $ and the admissible flux limiters are set as $c_{z_1}=c_{z_2}=c_{z_3}\coloneqq-5$. The initial condition is $ v_0 \coloneqq0 $. This example has an exact solution for $ t \ge \frac{1}{\sqrt{10}} $, given by:
    \begin{equation*}
        u(x,t) =\left\{
        \begin{aligned}
            &\left(\frac12 - \left|x_2 - \frac12\right|\right)\sqrt{10} - 5t, && \text{if } x \in \gamma_1,\\
            &\left(\frac{1}{2\sqrt{2}} - \left||x| - \frac{1}{2\sqrt{2}}\right|\right)\sqrt{10} - 5t, && \text{if } x \in \gamma_2,\\
            &\left(\left|\left|x - \left(\frac{1}{2}, \frac{1}{2}\right)\right| - \frac{1}{2\sqrt{2}}\right| + \frac{1}{2\sqrt{2}}\right)\sqrt{10} - 5t, && \text{if } x \in \gamma_3,
        \end{aligned}
        \right.
    \end{equation*}
    where $ x = (x_1, x_2) $ is the variable in the physical space $ \Rds^2 $. We remark that, since the Hamiltonians do not depend on the space variable, the characteristics are straight lines and are computed exactly by our scheme. In addition, the exact solution $u$ is affine and then the interpolation error in~\eqref{eq:S_gamma} is almost zero.

    In \cref{tab:Hindipsolex}, we show the errors~\eqref{err} computed with several refinements of the space grid and three different values of $ \Delta t $, given by $ \Delta t = (\Delta x)^{4/5}/2 $, $ \Delta t = (\Delta x)/2 $ and $\Delta t = (\min_{\gamma}\Delta_\gamma x)/4 $. The last choice satisfies~\eqref{eq:consisrates.1}, since for this problem we can choose $\beta_0=4$. Even though the first two choices of $\Delta t$ violate condition~\eqref{eq:consisrates.1}, numerical convergence is observed (columns 2--5). We point out that the first choice satisfies~\eqref{def:ICFL}.

    In \cref{fig:Hindipsolex}, we display three plots representing the absolute value of the difference between the exact solution $ u $ at time $T=1$ and the approximated solution $ v $ computed at time $ T = 1 $ with $ \Delta x = 0.05 $ for $ \Delta t = (\Delta x)^{4/5}/2 $, $ \Delta t = \Delta x/2 $, and $ \Delta t = (\min_{\gamma}\Delta_\gamma x)/4 $. We observe no errors on the vertices in all the graphics. Indeed, in this case the exact solution $u$ on the vertices is given by $u(z_i,t)=c_{z_i} t$ and the scheme~\eqref{eq:globteikdisc} computes these values exactly. In the left plot, the largest errors occur near the vertices, where the minimization is performed on an asymmetric interval since the discrete characteristics are truncated by construction, and these errors propagate along the arcs. In the middle plot, we can observe the same phenomena, however the smaller time step reduce the errors. In the right plot, where condition~\eqref{eq:consisrates.1} is satisfied, the largest errors occur in the middle of the arcs and are produced only by the approximation of the minimum in~\eqref{eq:S_gamma}. This is because condition~\eqref{eq:consisrates.1} ensures that the minimization is performed, for each node of the mesh, on an interval large enough to contain the optimal control, as shown in the proof of \cref{consisrates}\ref{en:consisrates1}. This explains why the errors in columns 6--7 have much smaller order of magnitude with respect to the other columns.

    \begin{table}[hbt!]
        \renewcommand\arraystretch{1.1}

        \centering
        \begin{tabular}{lllllll}
            \toprule
            ${\Delta x}$ &$E^{\infty}$ &$E^{1}$ &$E^{\infty}$ & $E^{1}$ & $E^{\infty}$ &$E^{1}$ \\
            \midrule
            $1.00\cdot 10^{-1}$ &$1.62\cdot 10^{-1}$ &$3.95 \cdot 10^{-2}$ &$5.08\cdot 10^{-2}$ &$1.59\cdot 10^{-2}$ &$1.20\cdot 10^{-5}$ &$1.07\cdot 10^{-7}$ \\
            $5.00\cdot 10^{-2}$ &$1.04\cdot 10^{-1}$ &$1.94 \cdot 10^{-2}$ &$2.07\cdot 10^{-2}$ &$7.10\cdot 10^{-3}$ &$4.92\cdot 10^{-7}$ &$1.02\cdot 10^{-7}$\\
            $2.50\cdot 10^{-2}$ &$6.33\cdot 10^{-2}$ &$7.60 \cdot 10^{-3}$ &$9.27\cdot 10^{-3}$ &$3.39\cdot 10^{-3}$ &$3.66\cdot 10^{-7}$ &$6.74\cdot 10^{-8}$ \\
            $1.25\cdot 10^{-2}$ &$4.08\cdot 10^{-2}$ &$3.54 \cdot 10^{-3}$ &$4.35\cdot 10^{-3}$ &$1.65\cdot 10^{-3}$ &$8.87\cdot 10^{-8}$ &$1.72\cdot 10^{-8}$\\
            $6.25\cdot 10^{-3}$ &$2.58\cdot 10^{-2}$ &$1.51 \cdot 10^{-3}$ &$2.17\cdot 10^{-3}$ &$8.23\cdot 10^{-4}$ &$7.08\cdot 10^{-9}$ &$2.03\cdot 10^{-9}$\\
            \bottomrule
        \end{tabular}
        \caption{Errors for example in \cref{ssec:test1}, computed at time $T=1$. Columns 2--3 refer to $\Delta t=(\Delta x)^{4/5}/2$. Columns 4--5 refer to $\Delta t=\Delta x /2$. Columns 6--7 refer to $\Delta t=(\min_\gamma \Delta_\gamma x)/4$.}\label{tab:Hindipsolex}
    \end{table}

    \begin{figure}[hbt!]
        \centering

        \includegraphics[width=0.32\textwidth]{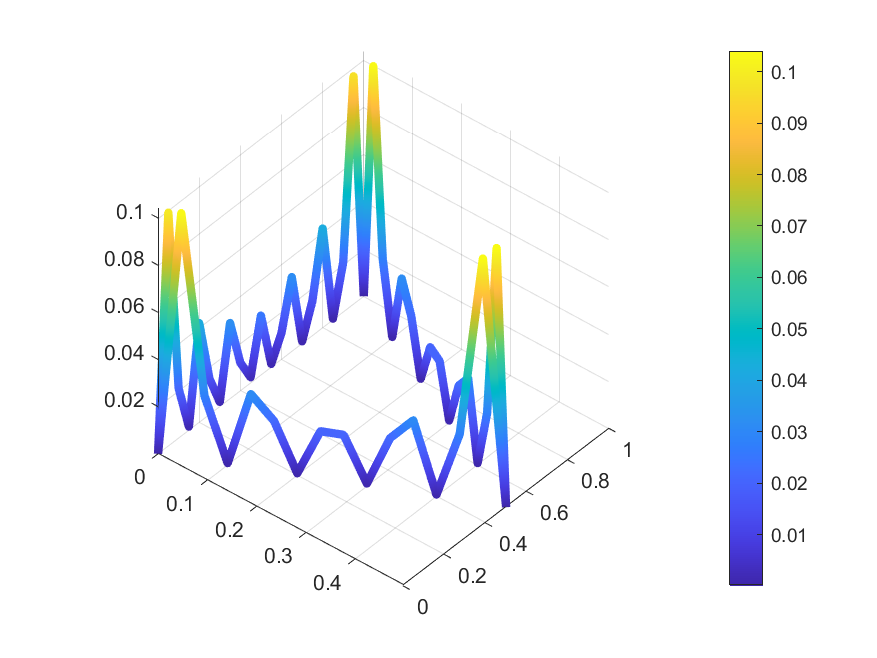}
        \includegraphics[width=0.32\textwidth]{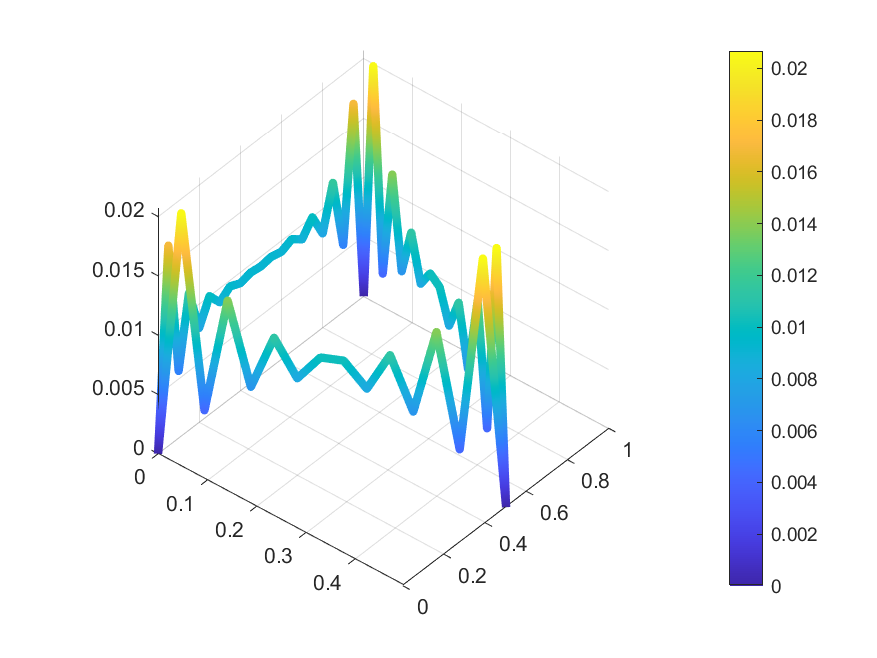}
        \includegraphics[width=0.32\textwidth]{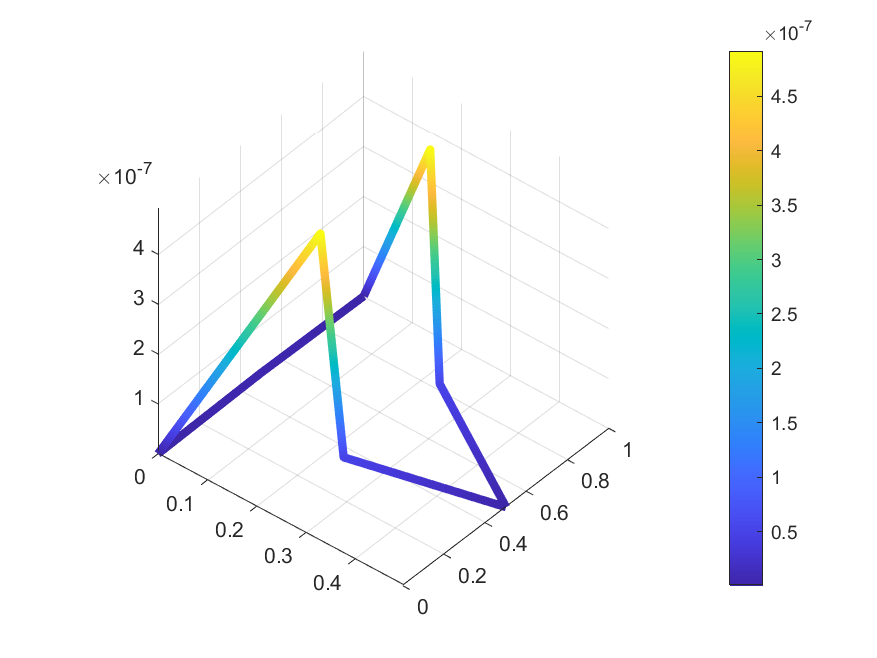}
        \caption{Errors $|u(x,T)-v(x,T)|$ for \cref{ssec:test1}, computed at $T=1$ with $ \Delta x = 0.05$, for $ \Delta t = (\Delta x)^{4/5}/2 $ (left), $ \Delta t = \Delta x/2 $ (center), $ \Delta t = (\min_{\gamma}\Delta_\gamma x)/4 $ (right).}\label{fig:Hindipsolex}
    \end{figure}

    \subsection{Test with Hamiltonians Dependent on \texorpdfstring{$s$}{s}}\label{ssec:test2}

    In this example, the cost functions are taken depending on $s$, as
    \begin{align*}
        L_{\gamma_1}(s,\lambda) \coloneqq\;& \frac{|\lambda|^2}{2}+ 5|\gamma_1(s)-(0.5,0.5)|^2, \\
        L_{\gamma_2}(s,\lambda) \coloneqq\;& \frac{|\lambda|^2}{2}+ 5|\gamma_2(s) -(0.5,0.5)|^2+ 10 (\gamma_2(s)_1)^2, \\
        L_{\gamma_3}(s,\lambda) \coloneqq\;&\frac{|\lambda|^2}{2}+ 5|\gamma_3(s)-(0.5,0.5)|^2+10 (\gamma_3(s)_1)^2,
    \end{align*}
    where $\gamma(s)_1$ indicates the first component of the point $\gamma(s)$. The cost in $\gamma_1$ is optimized at $z_3$, whereas the costs in $\gamma_2$ and $\gamma_3$ increase with the distance of the physical point from $z_3$ and from the line $x_1=0$. We choose as admissible flux limiters $c_{z_1}=c_{z_2}=c_{z_3}\coloneqq1$ and initial condition $v_0(x)\coloneqq1-x$. For this problem the exact solution is not explicit, therefore we compute a reference solution $u$ using the scheme with $\Delta x=10^{-3}$ and $\Delta t=(\min_{\gamma}\Delta_\gamma x)/3$. In \cref{tab:Hdip}, we show the errors~\eqref{err} computed with several refinements of the space grid and three different values of $ \Delta t $, given by $ \Delta t = (\Delta x)^{4/5}/2 $, $ \Delta t = (\Delta x)/2 $ and $\Delta t = (\min_{\gamma}\Delta_\gamma x)/3 $. As in the previous test, the last choice satisfies~\eqref{eq:consisrates.1}, since for this problem we can choose $ \beta_0 =3 $. The first two choices of $\Delta t$ violate condition~\eqref{eq:consisrates.1}, however the first one satisfies requirement~\eqref{def:ICFL}. Unlike in the previous test, for any choice of $\Delta x$, we observe almost the same errors corresponding to the three different choices of $\Delta t$. This is because, in addition to the errors related to the choice of $\Delta t$ discussed in the previous test, here two truncation errors are added. These are the truncation error of the Euler method, used in scheme~\eqref{eq:S_gamma} to approximate the characteristics which are not affine since the Hamiltonians depend on $s$, and the interpolation error, since the exact solution in this case is not affine.

    \begin{table}[hbt!]
        \renewcommand\arraystretch{1.1}

        \centering
        \begin{tabular}{lllllll}
            \toprule
            ${\Delta x}$ &$E^{\infty}$ &$E^{1}$ &$E^{\infty}$ & $E^{1}$ & $E^{\infty}$ &$E^{1}$ \\
            \midrule
            $1.00\cdot 10^{-1}$ &$4.41\cdot 10^{-2}$ &$5.11 \cdot 10^{-2}$ &$4.50\cdot 10^{-2}$ &$5.22\cdot 10^{-2}$ &$4.55\cdot 10^{-2}$ &$5.24\cdot 10^{-2}$ \\
            $5.00\cdot 10^{-2}$ &$2.43\cdot 10^{-2}$ &$2.77 \cdot 10^{-2}$ &$2.28\cdot 10^{-2}$ &$2.78\cdot 10^{-2}$ &$2.29\cdot 10^{-2}$ &$2.78\cdot 10^{-2}$\\
            $2.50\cdot 10^{-2}$ &$1.37\cdot 10^{-2}$ &$1.63 \cdot 10^{-2}$ &$1.45\cdot 10^{-2}$ &$1.47\cdot 10^{-2}$ &$1.49\cdot 10^{-2}$ &$1.47\cdot 10^{-2}$ \\
            $1.25\cdot 10^{-2}$ &$8.22\cdot 10^{-3}$ &$9.73 \cdot 10^{-3}$ &$7.65\cdot 10^{-3}$ &$7.53\cdot 10^{-3}$ &$8.07\cdot 10^{-3}$ &$7.61\cdot 10^{-3}$\\
            $6.25\cdot 10^{-3}$ &$4.46\cdot 10^{-3}$ &$4.97 \cdot 10^{-3}$ &$3.54\cdot 10^{-3}$ &$3.52\cdot 10^{-3}$ &$3.76\cdot 10^{-3}$ &$3.57\cdot 10^{-3}$\\
            \bottomrule
        \end{tabular}
        \caption{Errors for example in \cref{ssec:test2}, computed at time $T=1$. Columns 2--3 refer to $\Delta t=(\Delta x)^{4/5}/2$. Columns 4--5 refer to $\Delta t=\Delta x /2$. Columns 6--7 refer to $\Delta t=(\min_\gamma \Delta_\gamma x)/3$.}\label{tab:Hdip}
    \end{table}

    These results indicate that the time step restriction~\eqref{eq:consisrates.1} is sufficient but not necessary for convergence and suggest that a-priori convergence rates could be achievable under weaker conditions.

    \printbibliography[heading=bibintoc]

\end{document}